\documentclass[10pt]{amsart}

\usepackage{amssymb, amsfonts, amsmath, amsthm, amsbsy}
  \ifx\luatexversion\undefined
  \ifx\XeTeXversion\undefined
\usepackage[english]{babel}
\else
\usepackage{fontspec}
\usepackage{polyglossia}
\setmainlanguage{english}
\usepackage{microtype}
\usepackage[math-style=ISO]{unicode-math}
\fi
\else
\usepackage{fontspec}
\usepackage[english]{babel}
\usepackage{lualatex-math}
\usepackage[math-style=ISO]{unicode-math}
\fi

\usepackage{hyperref}
\usepackage{todonotes, graphicx, verbatim, enumerate}
\usepackage{pagecolor}

\newcommand{\eps}{\varepsilon}

\newcommand{\eg}{\text{e.g.}\ }
\newcommand{\ie}{\text{i.e.}\ }

\newcommand{\resp}{\text{resp.}\ }

\numberwithin{equation}{section}

\usepackage{lunarized-ng}
\usepackage{todonotes}
\usepackage{xcolor}
\usepackage[normalem]{ulem}
\usepackage[T1]{fontenc}

\usepackage{amsthm}
\usepackage{amsmath}
\usepackage{hyperref}
\usepackage{dsfont} 
\usepackage{tikz}

\newcommand{\ve}{\varepsilon}
\renewcommand{\resp}[1]{(resp.~#1)}

\newcommand{\bT}{\mathbb{T}}

\newcommand{\bR}{\mathbb{R}}
\newcommand{\bZ}{\mathbb{Z}}
\newcommand{\bA}{\mathbb{A}}
\newcommand{\bC}{\mathbb{C}}
\newcommand{\hbA}{{\bA}}
\newcommand{\rrr}{h}
\newcommand{\cbv}{C_{\textrm{BV}}}
\newcommand{\ccbv}{C_{\star}}

\newcommand{\tg}{\textrm{tg}}
\newcommand{\arctg}{\textrm{arctg}}
\newcommand{\im}{\textrm{Im}\,}
\newcommand{\re}{\textrm{Re}\,}

\newcommand{\rhoal}{\rho_{\al}}
\newcommand{\rhoL}{\rho_{\El}}

\newcommand{\Cl}{C_{\El}}

\newcommand{\Ang}{Z}
\newcommand{\Aang}{A}
\newcommand{\al}{\textrm{al}}

\DeclareFontFamily{U}{wncy}{}
\DeclareFontShape{U}{wncy}{m}{n}{<->wncyr10}{}
\DeclareSymbolFont{mcy}{U}{wncy}{m}{n}
\DeclareMathSymbol{\El}{\mathord}{mcy}{"4C}
\DeclareMathSymbol{\el}{\mathord}{mcy}{"6C}

\numberwithin{equation}{section}

\title[Birkhoff Billiards inside a disc]{Yet another characterization
  of Birkhoff Billiards inside discs}
\author{Klaudiusz Czudek}
\address{Klaudiusz Czudek, Institute of Applied Mathematics, Faculty of Physics and Applied Mathematics, Gda{\'n}sk University of Technology, ul. Gabriela Narutowicza 11/12, 80-223 Gda{\'n}sk, Poland}
\email{klaudiusz.czudek@gmail.com}

\author{Jacopo De Simoi}
\address{Jacopo De Simoi\\
  Department of Mathematics\\
  University of Toronto\\
  40 St George St. Toronto, ON, Canada M5S 2E4}
\email{{\tt jacopods@math.utoronto.ca}}
\urladdr{\href{http://www.math.utoronto.ca/jacopods}{http://www.math.utoronto.ca/jacopods}}

\author{Andrew Gad}
\address{Andrew Gad,   Department of Mathematics\\
  University of Toronto\\
  40 St George St. Toronto, ON, Canada M5S 2E4}
\email{andrew.gad@utoronto.ca}
\author{Marco Poon}
\address{Marco Poon, Department of Mathematics\\
  University of Toronto\\
  40 St George St. Toronto, ON, Canada M5S 2E4}
\email{marco.poon@utoronto.ca}
\subjclass[2020]{ }

\keywords{}

\begin{document}
\maketitle
\begin{abstract}
  In this short paper, we show a characterization of Birkhoff
  Billiards inside discs which is related to the expansion of the
  formal Lazutkin conjugacy at the boundary.
\end{abstract}
\section{Introduction and statement of our main results}
Birkhoff billiards have been introduced almost a century ago by
Birkhoff~\cite{Bf27} as an example of a dynamical system that is
simple to describe, but yet extremely rich in features.  The system
describes a particle moving inside an open domain
$\Omega\subset\bR^{2}$ along geodesics (straight lines) with the rule
that, upon collisions with the boundary $\partial\Omega$, the particle
reflects according to the law of optical reflection (\ie the angle of
incidence equals the angle of reflection).  Despite the apparent
simplicity of the problem, understanding billiard dynamics in an
arbitrary domain is extremely challenging.  Although much progress has
recently taken place (see \eg the survey~\cite{MR4374965}), many
natural questions have remained open since their introduction and are
very active topics of research.

In order to fix ideas, we assume $\partial\Omega$ to be smooth (\ie
parametrized by a $C^{\infty}$ curve) and strongly convex (\ie such
that the curvature at any point is strictly positive); we let $\ell$
denote the length of the boundary $\partial\Omega$ and
$\bT_{\ell} = \bR/\ell\bZ$.  We denote with
$(s,\varphi)\in \mathcal M = \bT_{\ell}\times [0,\pi]$ the
\emph{Birkhoff coordinates} on the phase space, where $s$ is the
arc-length parameter corresponding to a collision point and $\varphi$
is the angle that the outgoing trajectory forms with the positively
oriented tangent to the boundary at that point.  A particle following
the trajectory identified by $(s,\varphi)$ will hit the boundary
$\partial\Omega$ again at a unique point identified by the arc-length
parameter $s'$ and then reflects off the boundary with a certain angle
of reflection $\varphi'$.  The map $f:(s,\varphi)\mapsto(s',\varphi')$
is called the \emph{Billiard Map}; since the domain has a smooth
boundary, then the billiard map is also smooth.

It is elementary to show that $f$ is diffeomorphism of
$\bT_{\ell}\times[0,\pi]$ onto itself: let $\mathcal J$ denote the
diffeomorphism $(s,\varphi) \mapsto (s,\pi-\varphi)$.  Observe that
$\mathcal J$ is an involution (\ie
$\mathcal J\circ \mathcal J = \textrm{Id}$) and, moreover, that
$f\circ\mathcal J\circ f\circ \mathcal J = \textrm{Id}$, which implies
that $f^{-1} = \mathcal J\circ f\circ \mathcal J$ (the map
$\mathcal J$ is called a \emph{reversor for $f$} as it reverses the
direction of time).  This observation shows that the dynamics of $f$
in a neighbourhood of the boundary curve $\{\varphi = 0\}$ also
determines the dynamics of $f$ in a neighbourhood of the other
boundary curve $\{\varphi = \pi\}$; for this reason, when studying the
behaviour of $f$ near the boundary, one can restrict to one of the two
boundary curves.  It is customary to choose the curve
$\{\varphi = 0\}$.

If $\Omega$ is a disc (of radius $r = \ell/2\pi$), then elementary
geometrical considerations yield an explicit formula for $f$:
\begin{align}\label{eq:billiard-map-disc}
  f:(s,\varphi) = (s + 2r\varphi,\varphi).
\end{align}

Lazutkin (see \eg \cite{L}) was the first to observe that if the
domain is smooth, the billiard map can be conjugated in a
neighbourhood of the boundary $\{\varphi = 0\}$ to an arbitrarily good
approximation of~\eqref{eq:billiard-map-disc}.  There are many
slightly different versions of this fundamental result; the version we
propose below can be obtained by combining the arguments
in~\cite[Appendix B]{MR3867233} and~\cite[Appendix A]{MR3665005};
see also~\cite[Lemma 14.6]{MR1239173}.
\begin{lem}\label{lem:high-order-lazutkin}
  Assume $\partial\Omega$ is smooth, for any $k > 0$ there exists a
  neighbourhood $U$ of $\{\varphi = 0\}$ and a diffeomorphism
  $\Psi:U\to V$, where $V\subset\bT\times[0,\varepsilon)$ is a
  neighbourhood of $\bT\times\{0\}$ which conjugates $f$ with the
  normal form
  \begin{align}\label{eq:lazutkin-normal-form}
    (x,y)\mapsto(x+y,y)+O(y^{2k+2}).
  \end{align}
  We call the above normal form \emph{Lazutkin normal form of order
    $k$}.  Moreover, we can write $\Psi = (X,Y)$, where
  \begin{align}\label{def_alpha}
    X(s,\varphi) &=  \sum_{j = 0}^{k-1}X_{j}(s)\varphi^{2j}& Y(s,\varphi) &=  \sum_{j = 0}^{k}Y_{j}(s)\varphi^{2j+1}.
\end{align}
Finally, if we normalize $X$ so that $X(0,\varphi) = 0$ for every $\varphi$,
then the functions $X_{j}$ and $Y_{j}$ are uniquely determined for
$j = 0,\cdots,k-1$.
\end{lem}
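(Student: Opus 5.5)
We will construct $\Psi$ order by order in $\varphi$, starting from the Taylor expansion of the billiard map at the boundary. Denote by $\kappa(s)>0$ the curvature of $\partial\Omega$. An elementary computation, using the reversibility $f^{-1}=\mathcal J\circ f\circ\mathcal J$, shows that $f$ admits an expansion
\begin{align*}
  s' = s + \sum_{j\ge 1} a_j(s)\varphi^j, \qquad \varphi' = \varphi + \sum_{j\ge 2} b_j(s)\varphi^j,
\end{align*}
with smooth coefficients. The leading terms are $a_1 = 2/\kappa$ and $b_2 = -\tfrac{2}{3}\kappa'/\kappa^2$. Reversibility imposes parity-type constraints on these coefficients; it is these constraints that make an ansatz with $X$ even and $Y$ odd in $\varphi$ consistent.

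The first step is the classical Lazutkin change of variables. We take
\begin{align*}
  X_0(s) = C\int_0^s \kappa^{-2/3}\,d\sigma, \qquad Y_0(s) = \frac{4}{C}\,\kappa(s)^{-1/3},
\end{align*}
where $C$ normalizes the total length so that $X_0$ maps $\bT_\ell$ onto $\bT$. A direct check shows that in these coordinates $f$ becomes $(x,y)\mapsto (x+y,y)+O(y^3)$ in the $x$-component and $+O(y^4)$ in the $y$-component. In fact, the choice of $Y_0$ is forced by requiring the $y^2$ term in $y'$ to vanish, and the choice of $X_0'$ is forced by requiring $x'-x = y+O(y^3)$. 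We impose $X_0(0)=0$.

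The second step is an induction on $k$. Suppose that coordinates $(X,Y)$ of the stated form, with $X_j,Y_j$ for $j<k$, give the normal form of order $k$. We look for corrections $X_k(s)\varphi^{2k}$ and $Y_k(s)\varphi^{2k+1}$, which in the current coordinates read $\xi(x)y^{2k}$ and $\eta(x)y^{2k+1}$. Substituting into the conjugacy equation and keeping the lowest-order error terms gives the linearized (homological) equations. These involve the finite differences $\xi(x+y)-\xi(x)=\xi'(x)y+\dots$, so they reduce to ODEs in $x$, schematically
\begin{align*}
  \eta(x) - c_k\,\xi'(x) = e_1(x), \qquad \eta'(x) = e_2(x),
\end{align*}
where $e_1,e_2$ are the leading error coefficients and $c_k$ is a nonzero constant. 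The equation for $\eta$ is solvable on $\bT$ only if $\int_{\bT} e_2=0$. Once $\eta$ is found, $\xi'$ is determined, and $\xi$ is periodic only if $\int(\eta-e_1)=0$.

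The main obstacle is verifying these solvability conditions. For this I would use that $f$ is an exact symplectic twist map with generating function given by the chord length $|\gamma(s)-\gamma(s')|$, together with the reversibility. First, exactness (zero flux) of the normal form with respect to the appropriate area form should force the average of the $y$-error $e_2$ to vanish; I would try to check this by computing the flux of the invariant curves $\{y=\text{const}\}$ to the relevant order. Second, the remaining freedom in the problem is the additive constant in $\eta$, and this constant can be chosen to make $\xi$ periodic.

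Uniqueness follows from the same structure. At each order the ODEs determine $\xi$ up to an additive constant, which is killed by the normalization $X(0,\varphi)=0$, and the constant in $\eta$ is fixed by periodicity of $\xi$. The last coefficient $Y_k$ is determined only modulo higher-order terms, which is why uniqueness is stated only for $j\le k-1$. Finally, we choose $U$ small enough that $\Psi$ is a diffeomorphism onto its image $V$; this is possible since $\partial_s X_0>0$ and $Y_0>0$.
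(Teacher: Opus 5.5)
Your strategy is the standard order-by-order scheme behind the references the paper cites for this lemma; the paper itself gives no proof. The scheme is: Lazutkin's coordinates first, then at each order the homological equations $\eta-\xi'=e_1$, $\eta'=e_2$, with the free constant in $\eta$ making $\xi$ periodic and $X(0,\varphi)=0$ fixing the constant in $\xi$. But the proposal has a gap exactly at the step you flag: $\int_{\bT}e_2=0$ is only hoped for, and ``zero flux'' alone does not give it.

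The flux of $\{y=c\}$ is measured with the invariant measure, whose density in the current coordinates is $y\,g(x)(1+O(y^2))$. Zero flux therefore only yields the weighted condition $\int_{\bT}e_2\,g\,dx=0$. The missing input is that $g$ is constant, and this holds:
\begin{itemize}
\item $\sin\varphi\,ds\,d\varphi$ pushes forward to $y\,(X_0'Y_0^2)^{-1}(1+O(y^2))\,dx\,dy$, and $X_0'Y_0^2=4C^3$ for Lazutkin's choice. (Equivalently, invariance of a density $y\,g(x)+O(y^2)$ under a map $(x+y+O(y^2),\,y+O(y^3))$ forces $g'=0$.)
\item Your later corrections $\xi y^{2k}$, $\eta y^{2k+1}$ change the density only by a factor $1+O(y^{2k})$.
\end{itemize}
So the density is $c_0\,y\,(1+O(y^2))$ with $c_0$ constant. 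The image of $\{0\le y\le c\}$ is bounded by the graph $y=c-e_2(x)c^{2k+2}+O(c^{2k+3})$. Comparing the two measures gives $c_0c^{2k+3}\int_{\bT}e_2\,dx=O(c^{2k+4})$, which is the solvability condition. In the paper's computation of the first nontrivial order, this appears as the forcing term in \eqref{eq:ODE-third-degree} being a total derivative.

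Three further points need fixing.

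\emph{Explicit formulas.} The integrand in $X_0$ is $\kappa^{2/3}$, not $\kappa^{-2/3}$. The first-order condition $x'-x=y+O(y^2)$ forces $Y_0=2X_0'/\kappa=2C\kappa^{-1/3}$, not $\frac4C\kappa^{-1/3}$. Also $b_2=+\frac23\kappa'/\kappa^2$. With the formulas as written the ``direct check'' fails, although your observation that both choices are forced would produce the right ones.

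\emph{Induction bookkeeping and reversibility.} With $X_j,Y_j$ for $j<k$ you do not have the lemma's normal form of order $k$, only errors $O(y^{2k+1})$ in $x$ and $O(y^{2k+2})$ in $y$. After solving your equations, an $x$-error at order $y^{2k+2}$ and a $y$-error at order $y^{2k+3}$ remain, and the ansatz cannot remove them at the next step. They vanish by reversibility, but this must be shown, not merely asserted about the coefficients of $f$. Since $X$ is even and $Y$ odd in $\varphi$, the conjugated map $F=(x+y,y)+(E_1,E_2)$ satisfies $F^{-1}=JFJ$ with $J(x,y)=(x,-y)$. This is the reversor of the smooth extension of $f$ across $\varphi=0$, not $\mathcal J$ itself. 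To leading order in $y$ it reads $E_2(x,-y)=E_2(x,y)$ and $E_1(x,-y)=-E_1(x,y)+E_2(x,y)$, which kills exactly those two terms.

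\emph{Matching the lemma's ansatz.} The lemma's $X$ has no $\varphi^{2k}$ term, so you must finish by choosing $Y_k$ from $\eta=e_1$ with $\xi=0$. This is also the real reason uniqueness is claimed only for $j\le k-1$: at the next order $X_k$ becomes nonzero and shifts $Y_k$ by $\xi'$.
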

\begin{rmk}
  Since a Lazutkin normal form of order $k$ is also a Lazutkin normal
  form of order $k'$ for $k' < k$, we conclude that the sequence
  $X_{j}$ and $Y_{j}$ can be uniquely determined from $\Omega$.
  Notice that $X_{0}$ is a toral diffeomorphism, whereas $X_{j}$ for
  $j > 0$ and $Y_{j}$ for $j\ge0$ are real-valued functions.
\end{rmk}

Observe that if $k = 1$, the function $X(s,\varphi) = X_{0}(s)$ simply
amounts to a reparametrization of the boundary, which is classically
known as the \emph{Lazutkin parametrization} and that we are now about
to describe.  Let $\rhoal(s)$ denote the radius of curvature of the
domain at the point parametrized by arc-length $s$ and
$\theta_{\al}(s)$ denote the angle that the positively oriented
tangent to $\partial\Omega$ forms with the horizontal positive
semi-axis.  Recall that, by definition, the radius of curvature at $s$
is the derivative of the tangent angle with respect to arc-length, that is:
\begin{align}\label{eq:theta-rho-relation}
  \dot\theta_{al}(s) &= \rhoal^{-1}(s).
\end{align}
Integrating the above along the domain yields the following elementary
version of Gauss–Bonnet formula, which will be useful in the sequel:
\begin{align}\label{sec:gauss-bonnet}
  \int_{0}^{\ell}\rhoal^{-1}(s)ds = 2\pi.
\end{align}
Lazutkin explicitly computed (in~\cite{L}) the
reparametrization formula from arc-length as follows:
\begin{align}\label{eq:lazutkin}
  X_{0}(s) = C_{\El} \int_{0}^{s}\rhoal^{-2/3}(s')ds',
\end{align}
where $C_{\El}$ is the normalizing
factor:
\begin{align}\label{eq:lazutkin-constant-arc-length}
  C_{\El} = \left[\int_{0}^{\ell}\rhoal^{-2/3}(s')ds'\right]^{-1}.
\end{align}
From the Lazutkin formula~\eqref{eq:lazutkin} it is clear that $X_{0}$
is a linear rescaling if and only if $\Omega$ is a disc (since
$X_{0}'$ is a constant if and only if $\rhoal$ is a constant).

In this paper we perform a study of $X_{1}(s)$ that  allows to prove
the following characterization of Birkhoff billiards in a disc.
\begin{mthm}
  A smooth domain $\Omega$ is a disc if and only if $X_{1} \equiv 0$.
\end{mthm}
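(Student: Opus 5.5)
The plan is to compute $X_{1}$ explicitly in terms of the radius of curvature $\rhoal$ and its derivatives, and then show that $X_{1}\equiv 0$ is an ODE for $\rhoal$ whose only periodic solutions are constants.

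The direction ``disc $\Rightarrow X_{1}\equiv0$'' is immediate. For a disc, \eqref{eq:billiard-map-disc} shows that $X(s,\varphi)=s/\ell$ and $Y(s,\varphi)=2r\varphi/\ell$ conjugate $f$ exactly to $(x,y)\mapsto(x+y,y)$. By the uniqueness part of Lemma~\ref{lem:high-order-lazutkin} (with the normalization $X(0,\varphi)=0$), these are the Lazutkin coordinates of every order, so $X_{1}\equiv0$.

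For the converse, the first step is to expand the billiard map near $\{\varphi=0\}$ to the order needed for the Lazutkin normal form of order $2$:
\begin{align*}
  s' = s + a_{1}(s)\varphi + a_{2}(s)\varphi^{2} + a_{3}(s)\varphi^{3} + O(\varphi^{4}), \qquad
  \varphi' = \varphi + b_{2}(s)\varphi^{2} + b_{3}(s)\varphi^{3} + O(\varphi^{4}).
\end{align*}
Here $a_{1}=2\rhoal$, $a_{2}=\tfrac43\rhoal\rhoal'$ and $b_{2}=-\tfrac23\rhoal'$, while $a_{3},b_{3}$ are polynomials in $\rhoal,\rhoal',\rhoal''$. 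They are obtained by writing $\partial\Omega$ locally through its Taylor expansion in curvature and intersecting with the chord.

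Next I substitute $X=X_{0}+X_{1}\varphi^{2}$ and $Y=Y_{0}\varphi+Y_{1}\varphi^{3}$ into the conjugacy equations
\begin{align*}
  X\circ f = X+Y+O(\varphi^{4}), \qquad Y\circ f = Y + O(\varphi^{5}),
\end{align*}
and solve order by order. The low orders reproduce Lazutkin's formula: $Y_{0}=2\rhoal X_{0}'$ together with $X_{0}'=C_{\El}\rhoal^{-2/3}$. At the next order, the $\varphi^{3}$ coefficient of the first equation and the $\varphi^{4}$ coefficient of the second form a linear system for $(X_{1},Y_{1})$. Eliminating $Y_{1}$ should leave a first-order equation
\begin{align*}
  X_{1}'(s) = F\bigl(\rhoal,\rhoal',\rhoal''\bigr)(s) - \lambda\, C_{\El}\rhoal^{-2/3}(s).
\end{align*}
Here $F$ is an explicit local expression, and $\lambda$ is the constant forced by requiring $X_{1}$ to be $\ell$-periodic: it is the free homogeneous solution, proportional to $X_{0}'$. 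The normalization $X_{1}(0)=0$ then fixes $X_{1}$ uniquely. I expect $F$ to have the shape $\alpha\,\rhoal^{p}\rhoal''+\beta\,\rhoal^{p-1}(\rhoal')^{2}$ for suitable constants $\alpha,\beta$ and power $p$, possibly up to an exact derivative.

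Assuming $X_{1}\equiv0$, we get $F=\lambda C_{\El}\rhoal^{-2/3}$ pointwise. I will change variables to write $F$ as $(\text{exact derivative})+\gamma\,w(\rhoal)(\rhoal')^{2}$ with $w>0$. Integrating over a period, or first multiplying by a suitable power of $\rhoal$ before integrating by parts, should yield
\begin{align*}
  \int_{0}^{\ell}(\rhoal')^{2}\,\tilde w(\rhoal)\,ds = c\,\lambda
\end{align*}
with $\tilde w>0$. Then either $\lambda$ is forced to vanish, or I compare with a second weighted integral using \eqref{sec:gauss-bonnet} and Hölder's inequality (equality in Hölder only for constant $\rhoal$). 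In either case this gives $\rhoal'\equiv0$, hence $\Omega$ is a disc.

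The main obstacle is the order-three bookkeeping: getting $a_{3},b_{3}$ and the resulting $F$ exactly right. The final positivity argument depends delicately on the coefficients, so I would cross-check $F$ on a near-circular perturbation, where $X_{1}$ can be computed to first order directly.
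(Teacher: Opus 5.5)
Your proof has a genuine gap in the last step. The direction ``disc $\Rightarrow X_{1}\equiv0$'' is fine, and computing $X_{1}$ by solving the conjugacy equations directly in Birkhoff coordinates is a legitimate, if heavier, alternative to the paper's angular-function computation. The problem starts with the shape you expect for $F$, which is wrong. Besides the $\dot\rhoal^{2}$ and $\ddot\rhoal$ terms, there is a term with no derivatives of the curvature; it comes from the turning of the tangent along the chord and survives for the disc. Explicitly, $X_{1}\equiv0$ is equivalent to $\frac29\rhoal^{-2/3}\dot\rhoal^{2}-\frac13\rhoal^{1/3}\ddot\rhoal+\frac18\rhoal^{-2/3}=\Cl^{2}\cbv$; this is $\Aang'\equiv0$ from~\eqref{eq:alpha-prime}, rewritten in arc-length. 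So your $\lambda$, which is up to a nonzero factor the paper's $\cbv>0$, never vanishes. Integrating the equation over a period only reproduces the definition of $\lambda$.

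More seriously, this pointwise equation has nonconstant positive periodic solutions: the paper's family $E\in(0,1/4)$ of $\zeta_{\tau\tau}=\zeta-\zeta^{3}$. What excludes them is purely global: over one period the tangent turns by exactly $4\pi$. Identities obtained by multiplying by local expressions in $\rhoal$ and integrating by parts cannot detect this, and neither can H\"older inequalities among the resulting integrals. Take such a solution over one period and replace $ds$ by $ds/2$. All those identities and inequalities remain true, because the constant $\Cl^{2}\cbv$ is a ratio of two integrals and does not change. But $\int\rhoal^{-1}ds$ drops to $2\pi$. So an ``equality in H\"older'' argument cannot close; you need an input tied to the length of the parameter circle.

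The paper gets this input by computing the turning angle $\Theta(E)=4\pi$ from the conserved energy. A quick alternative is to parametrize by the tangent angle $\theta\in\bR/2\pi\bZ$:
\begin{itemize}
\item Since $d\theta/d\xi=2\sqrt2\,h/\Cl$, one finds $h''/h=4\Cl^{-2}(h^{2})_{\theta\theta}$.
\item With $u=\rhoal^{-2/3}=8h^{2}$, the condition $X_{1}\equiv0$ becomes $u_{\theta\theta}+\frac14u=2\Cl^{2}\cbv$.
\item Multiplying by $u-\bar u$ and integrating gives $\int u_{\theta}^{2}\,d\theta=\frac14\int(u-\bar u)^{2}\,d\theta$.
\item Wirtinger's inequality on a circle of length $2\pi$ then forces $u$ to be constant; equivalently, $\frac14$ is not of the form $n^{2}$.
\end{itemize}
The nonconstant solutions $u=c+a\cos(\theta/2+\theta_{0})$ are exactly the $4\pi$-periodic ones.
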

\begin{rmk}\label{rmk:smoothness}
  In this paper (and hence the statement of our Main Theorem above ) we
  assume $C^{\infty}$ smoothness of the domain out of convenience, but
  the result indeed holds also in finite smoothness (\eg for $C^{8}$
  domains).
\end{rmk}
The formula~\eqref{eq:billiard-map-disc} together with uniqueness of
the conjugacy $\Psi$ immediately implies that if $\Omega$ is a disc,
then $X_{1} \equiv 0$; this paper concerns the proof of the reverse
implication.
\subsection{Comments and ideas for future developments}
\begin{rmk}
  Our Main Theorem is one of the many dynamical characterizations of
  billiards in disc.  For instance Bialy in~\cite{Bi} showed that the
  only domain whose phase space is completely foliated by
  non-contractible invariant curves (\ie the billiard is
  \emph{globally integrable}) is a disc.  Discs are also characterized
  by the length of periodic orbits; for instance it is possible to
  show (see~\cite{MR216370,MR293501,MR866129,MR4628019}) that if the
  maximal length of a $n$-gon inscribed in $\Omega$ equals the length
  of the regular $n$-gon inscribed in a disc of circumference $\ell$,
  then $\Omega$ is necessarily a disc.  In the recent
  preprint~\cite{arXiv:2509.06915} it is shown that knowledge of the
  Mather's $\beta$ function at only one rotation number may suffice to
  determine that the domain is a disc.
\end{rmk}
\begin{rmk}
  Lemma~\ref{lem:high-order-lazutkin} shows that it is always possible
  to formally (\ie up to $O(\varphi^{\infty})$) conjugate the billiard
  map “at the boundary” of the domain to some \emph{universal} normal
  form, which is given by
  \begin{align*}
    (\xi,\eta)\mapsto(\xi+\eta,\eta).
  \end{align*}
  The term \emph{universal} above refers to the fact that the normal
  form itself does not contain any information about the domain.  It
  is rather the conjugacy map $\Psi$ that, through this procedure,
  absorbs all information about the domain.  It makes therefore sense
  to ask how much information about the domain can be retrieved by a
  partial knowledge of the formal conjugacy, and this is the direction
  in which this paper is progressing.
\end{rmk}
\begin{rmk}[On Lazutkin conjugacy
  equivalence]\label{rmk:lazutkin-conjugacy-equivalence}.  On the one
  hand, in the remark above it is observed that, in the limit
  $k\to\infty$, the normal form is stripped of all information on
  $\Omega$.  On the other hand, for any $k$, some information is still
  present in the structure of the remainder terms
  in~\eqref{eq:lazutkin-normal-form}.  Lazutkin (see
  \eg~\cite[(14.3)]{MR1239173}) considers the following change of
  variables
\begin{align}\label{eq:classical-Lazutkin}
  X(s,\varphi) &= \Cl \int_0^s \rhoal^{-2/3}(s')ds',&
  Y(s,\varphi) &= 4\Cl \rhoal^{1/3}(s) \sin({\varphi/2}).
\end{align}
This variation on~\eqref{def_alpha} has the desirable property of
preserving a symplectic form.  The billiard map in the above
coordinates~\eqref{eq:classical-Lazutkin} admits the expansion (see
\eg~\cite[(14.4)]{MR1239173})
\begin{equation}\label{billiard_map_lazutkin}
\left\{\begin{array}{rcl}
x^+ &=& x + y + \tilde{\alpha}_3(x) y^3 + \tilde{\alpha}_4(x) y^4 + O(y^5)\\
y^+ &=& y + \tilde{\beta}_4(x)y^4 + O(y^5).
\end{array} \right.
\end{equation}
Fierobe recently announced in~\cite{Fierobe_2025} the proof that two
billiard tables with the same expansion \eqref{billiard_map_lazutkin}
are necessarily homothetic (\ie a rescaling of one another).  It can be
calculated that
\begin{align*}%
 \tilde{\alpha}_3&=\frac1{27}\left(\frac{\rhoL'}{\rhoL}\right)^{2} - \frac{1}{36}\frac{\rhoL''}{\rhoL} + \frac{1}{96 \Cl^2 \rhoL^{2/3}},\\
  \tilde{\alpha}_4&= -\frac{4}{135}\left(\frac{\rhoL'}{\rhoL}\right)^{3} + \frac{11}{270}\frac{\rhoL'\rhoL''}{\rhoL^2} - \frac1{90}\frac{\rhoL'''}{\rhoL} - \frac{\rhoL'}{360 \Cl^2 \rhoL^{5/3}},\\
\tilde{\beta}_4 &= \frac{2}{135}\left(\frac{\rhoL'}{\rhoL}\right)^{3}
                  - \frac{11}{540}\frac{\rhoL'\rhoL''}{\rhoL^2} + \frac1{180}\frac{\rhoL'''}{\rhoL} + \frac{\rhoL'}{720 \Cl^2 \rhoL^{5/3}}.
\end{align*}
Notice that $\tilde\beta_{4} = -\tilde\alpha_{4}/2$.  From our
discussion above, it is clear that there should be a relation between
$\tilde\alpha_{3}, \tilde\alpha_{4}$ and $\tilde\beta_{4}$ and the
functions appearing in the expansion~\eqref{def_alpha}.  For instance,
modifying $Y_{3}(s)$ will arbitrarily affect $\tilde\alpha_{3}$ (for
instance, Lemma~\ref{lem:high-order-lazutkin} ensures that we could
find $Y_{1}(s)$ which would set $\tilde\alpha_{3} = 0$; see
\eg~Remark~\ref{rmk:relation-with-Lazutkin} for further relations).

Fierobe subsequently realized that the proof in~\cite{Fierobe_2025} is
incorrect.  The paper uses the formulae for the billiard map which can
be found in~\cite[Page 145]{MR1239173}; unfortunately such formulae
are wrong.

The result presented in this paper would allow (see again
Remark~\ref{rmk:relation-with-Lazutkin}) to prove that the domain is a
disc if and only if $\tilde\alpha_{4} = \tilde\beta_{4} = 0$.

The result stated in~\cite{Fierobe_2025} is still without a proof; we
translate it below here in a language that is more in tune with the
setting of the present paper.\\
\textbf{Question:} Does the function $X_{1}(s)$ determine the domain
modulo homotheties?\footnote{ It is worth recalling the following much
  stronger conjecture, attributed to V. Guillemin: two billiard
  domains are homothetic if and only if their billiard maps are
  \emph{topologically conjugate}.}
\end{rmk}
\begin{rmk}[Distribution of collision points of periodic
  orbits]\label{R:alpha_DKW}
  In~\cite{MR3665005}, the authors consider axially symmetric strictly
  convex domains with boundary of class $C^r$, $r\ge 8$.  Without loss
  of generality, let us assume that the $0$ of the parametrization is
  given by one of the intersections of $\partial\Omega$ with the
  symmetry axis.  Rather elementary symmetry considerations allow to
  prove that there always exists a periodic orbit of any rotation
  number that has $0$ as a collision point; such orbit is unique for
  sufficiently small rotation number. It is proven in~\cite{MR3665005}
  that if $(x^k_q)$, $k=0,1\cdots, q-1$, are the coordinates of the
  periodic orbit of rotation number $1/q$ expressed in the Lazutkin
  parametrization~\eqref{eq:lazutkin}, then
  \begin{equation}\label{alpha_periodic}
    x^k_q = \frac k q + \frac{\alpha(k/q)}{q^2}+O(q^{-4})
  \end{equation}
  for some function $\alpha\in C^{r-4}(\mathbb{T})$.  Inspecting the
  proof (presented in~\cite[Appendix A]{MR3665005}) reveals that
  $X_1(s)=-\alpha(X_0(s))Y_0(s)^2$. Since
  $Y_0(s)=2\Cl\rho^{1/3}(s) \neq 0$, the assumption in our Main
  Theorem is equivalent to say that $\alpha$ in~\eqref{alpha_periodic}
  equals zero.  In other terms: we know that for an arbitrary
  symmetric domain, collision points of periodic orbits are
  Lazutkin-equidistributed modulo a correction
  $\alpha/q^{2}+O(1/q^{4})$.  Our Main Theorem asserts that the
  leading order ($1/q^{2}$) of this correction is $0$ if and only if
  the domain is a disc.  A version of this remark can be also
  formulated for domains that are not necessarily symmetric.
\end{rmk}
\begin{rmk}
  Of course it is natural to ask similar questions for $X_{j}$ with
  $j > 1$.  We expect that it would still be possible to find
  differential relations involving $X_{1}, X_{2}, \cdots, X_{j}$ that
  would imply that $\Omega$ is a disc.  It is unclear that such
  relations would be as simple as the one presented in this paper.
\end{rmk}
\subsection*{Acknowledgements}
This paper originated as an undergraduate summer research project for
AG and MP funded by a Mathematical and Computational Sciences Research
Award at the University of Toronto Mississauga under the supervision
of JDS.  JDS acknowledges support of the NSERC Discovery grant,
reference number RGPIN-2022-04188.  JDS also wishes to thank Fabio
Pusateri, for our always insightful exchanges of ideas, and Amir Vig,
who suggested the reference~\cite{Fierobe_2025}.  JDS and KC are
thankful to Corentin Fierobe for useful discussions about his
paper.  The algebraic computations presented in this paper have been
carried out using the free open-source mathematics software system
\href{https://www.sagemath.org/}{SageMath}, which is released under
the GNU General Public Licence GPLv2+.
\section{The main calculation}
The purpose of this section is to obtain a relation involving
$X_{1}$ and $\rhoal$; this relation will be used in the next
section to prove our Main Theorem.  One possible way to approach this
problem is by computing explicit formulas for the change of
coordinates performed in~\cite[Lemma 5.1]{MR3665005} or~\cite[Appendix
B]{MR3867233}. Here we propose an equivalent method, that is slightly
more geometric in spirit and has the additional advantage that all the
computations are performed in the same coordinates.
\subsection{Angular functions}
We will from now on assume that the arc-length parametrization of
$\partial\Omega$ is smooth (see Remark~\ref{rmk:smoothness}).  Also,
we find convenient to identify $\bR^{2}$ with $\bC$: correspondingly,
we will denote points on $\partial\Omega\subset\bC$ with the symbol
$z$.  This being established, it is useful to observe that we can
express the arc-length parametrization as:
\begin{align}\label{eq:complex-parametrization}
  z(s) = \int_{0}^{s}e^{i\theta_{\al}(s')}ds',
\end{align}
where we assume that the point identified by $s = 0$ is at the origin
and tangent to the positive horizontal direction (in particular
$\Omega$ is contained in the upper half plane).  We begin by providing
some definitions:
\begin{mydef}\label{def:pseudocollision}
  Let $(z_{-}, z_{0}, z_{+})$ identify a triple of points on
  $\partial\Omega$. Let $\varphi^{+}$ \resp{$\varphi^{-}$} denote the
  angle between $z_{0}z_{+}$ \resp{$z_{0}z_{-}$} with the positively
  \resp{negatively} oriented tangent direction at $z_{0}$ (see
  Figure~\ref{F1}).  Let $\ve > 0$: we say that $(z_{-},z_{0},z_{+})$
  identifies a \emph{billiard $\ve$-pseudocollision} if
  $|\varphi^{+}-\varphi^{-}| < \ve$.
\end{mydef}
Notice that if $(z_{-},z_{0}, z_{+})$ are three subsequent collision
points of an actual billiard trajectory, then such triple is an
$\eps$-pseudocollision for every $\eps > 0$.  Notice moreover that
$(z_{-},z_{0}, z_{+})$ is an $\eps$-pseudocollision if and only if so
is $(z_{+},z_{0}, z_{-})$.
%
%
Let us introduce the notation $\hbA_{\ve} = \bT\times(-\ve,\ve)$ and
let $\hbA_{\ve}^{*} = \hbA_{\ve}\setminus(\bT\times\{0\})$.
\begin{mydef}[Angular function]\label{def:angular-function}
  Let $k\ge0$; a smooth function
  $\Ang\in C^{\infty}(\hbA_{\ve},\partial\Omega)$ is called an
  \emph{angular function} of order $k$ if:
  \begin{enumerate}
  \item $\Ang(\cdot,\eta) = \Ang(\cdot, -\eta)$;
  \item for any $\eta\in(-\ve,\ve)$, the map $\Ang(\cdot,\eta)$ is an
    orientation-preserving diffeomorphism and $\Ang(0,\eta) = \Ang(0,0)$;
  \item for any $(\xi,\eta)\in\hbA_{\ve}^{*}$, the triple
    \begin{align*}
      (\Ang(\xi-\eta,\eta),\Ang(\xi,\eta),\Ang(\xi+\eta,\eta))
    \end{align*}
    identifies a billiard $O(\eta^{2k+2})$-pseudocollision (here the
    notation $O(\cdot)$ is intended as $\eta\to0$).
  \end{enumerate}
\end{mydef}
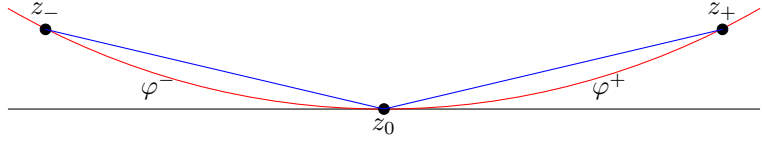
\begin{figure}[!ht]
\begin{tikzpicture}
\draw (-5,0) -- (5,0);
\draw[red, rotate=0] (0:0) arc (270:300:10);
\draw[red, rotate=0] (0:0) arc (270:240:10);
\filldraw (4.5,1.06) circle(2pt) node[anchor=south]{$z_+$};
\filldraw (-4.5,1.06) circle(2pt) node[anchor=south]{$z_-$};
\filldraw (0,0) circle(2pt) node[anchor=north]{$z_0$};
\draw[blue] (4.5, 1.06) -- (0,0);
\draw[blue] (-4.5, 1.06) -- (0,0);
\node at (3,0.3) {$\varphi^+$};
\node at (-3,0.3) {$\varphi^-$};
\end{tikzpicture}
\caption{The triple $(z_{-}, z_{0}, z_{+})$ identifies a billiard
  $\ve$-pseudocollision if $|\varphi^{+}-\varphi^{-}| < \ve$}
\label{F1}
\end{figure}

Let $\Ang$ be an angular function and define
$\Ang^{\pm}(\xi,\eta) = \Ang(\xi\pm \eta,\eta)$.  For
$(\xi,\eta)\in\hbA_{\ve}^{*}$ we define the functions
$\Phi^{\pm}(\xi,\eta)$ as follows:\footnote{ The choice to use
  \emph{arctangent} rather than \emph{argument} in the definition
  is deliberate, in order to obtain convenient symmetry properties.
  An alternative approach would be to use a projective version of the
  argument function (\ie the argument modulo $\pi$ so that $-\varphi$
  and $\pi-\varphi$ would then be equivalent). }
\begin{align}\label{eq:Phi-pm}
  \Phi^{\pm}(\xi,\eta) &= \arctg\frac{\im \delta z^{\pm}(\xi,\eta)}{\re
                         \delta z^{\pm}(\xi,\eta)}
,\textrm{where }\delta z^{\pm}(\xi,\eta) = e^{-i\theta(\Ang(\xi,\eta))}
                         \left[{\Ang^{\pm}(\xi,\eta)}-\Ang(\xi,\eta)\right],
\end{align}
and $\theta(z)$ denotes the angle between the positively oriented
tangent vector to $\partial\Omega$ at $z$ and the positive horizontal
direction.  Notice that, since we chose to preserve orientation in
Definition~\ref{def:angular-function}, for positive $\eta$, the
complex number $\delta z^{+}(\xi,\eta)$
\resp{$\delta z^{-}(\xi,\eta)$} belongs to the top-right
{\resp{top-left}} quadrant.  It is hence clear that, for positive
$\eta$, the functions $\Phi^{\pm}({\xi,\eta})$ give the angles
$\varphi^{\pm}$ corresponding to the triple
$(\Ang(\xi-\eta,\eta),\Ang(\xi,\eta),\Ang(\xi+\eta,\eta))$.  Observe
that since $\partial\Omega$ is smooth, we can guarantee that
$\theta(s)$ is also smooth.  Notice that our definition implies
$\Ang^{\pm}(\xi,-\eta) = \Ang^{\mp}({\xi,\eta})$, hence
$\Phi^{\pm}(\xi,\eta) = -\Phi^{\mp}(\xi,-\eta)$.  Moreover,
$\lim_{\eta\to0}\Phi^{\pm}(\xi,\eta) = 0$; we thus extend $\Phi^{\pm}$
to the whole annulus $\hbA_{\ve}$ by continuity, by setting
$\Phi^{\pm}(\cdot,0) = 0$.

We claim that, with this definition, $\Phi^{\pm}$ is a smooth
function.  In fact it is clear that $\Phi^{\pm}$ is smooth on
$\hbA_{\ve}^{*}$; it is then a simple exercise\footnote{ The exercise
  is similar to checking that if a smooth function $f(x)$ goes to $0$
  at $x = 0$, then $f(x)/x$ (extended by continuity to be equal to
  $f'(0)$ at 0) is also smooth.}  to show that $\Phi^{\pm}$ is also
smooth at $\eta = 0$.

The above discussion implies that $\Phi^{+}+\Phi^{-}$ is an odd
function of $\eta$, whereas $\Phi^{+}-\Phi^{-}$ is an even function of
$\eta$.  This observation justifies the fact that we only consider
even powers of $\eta$ in item (3) in
Definition~\ref{def:angular-function}.
\begin{rmk}\label{rmk:even-terms-angular}
  The above discussion suggests a condition equivalent to item (3) in
  Definition~\ref{def:angular-function}.  Let us consider the
  (partial) Taylor expansion at $\eta = 0$:
   \begin{align}\label{eq:Phi-pm-expansion}
    \Phi^{\pm}(\xi,\eta) = \sum_{j = 1}^{2k+1}\Phi^{\pm}_{j}(\xi){\eta^{j}}+O(\eta^{2k+2}).
  \end{align}
  The symmetry relations discussed above imply that the following
  facts are equivalent
  \begin{itemize}
  \item $\Phi^{+}_{2l} \equiv 0$ for $l\in\{1,\cdots,k\}$;
  \item $\Phi^{-}_{2l} \equiv 0$ for $l\in\{1,\cdots,k\}$;
  \item $\Ang$ is an angular function of order $k$.
  \end{itemize}
  The relations appearing in any of the first two items will provide
  (differential) equations that can be solved to obtain explicit
  expressions for the function $\Ang$.
\end{rmk}
\subsection{Relation between angular functions and high-order Lazutkin
  coordinates}
We now exhibit a relation between the function $\Ang$ and the Lazutkin
change of variables $\Psi$: more in detail, we show that the existence
of an angular function $\Ang$ of order $k$ is equivalent to existence
of a change of coordinates $\Psi_{k}$ given in
Lemma~\ref{lem:high-order-lazutkin}.  We call such a change of
coordinates \emph{Lazutkin change of coordinates of order $k$}.
\begin{lem}\label{lem:angular-lazutkin} The following are equivalent:
  \begin{enumerate}
  \item there exists a Lazutkin change of coordinates of
    order $k$;
  \item there exists a angular function $\Ang$ of order $k$
    on some $\hbA_{\ve}$.
  \end{enumerate}
\end{lem}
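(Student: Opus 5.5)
The plan is to pass between the two objects through the inverse map $\Psi^{-1}$: an angular function should be the boundary point $s$ written in Lazutkin coordinates, with $\eta$ playing the role of the Lazutkin momentum $y$.

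\textbf{From (1) to (2).} Let $\Psi=(X,Y)$ be a Lazutkin change of coordinates of order $k$, normalized so that $X(0,\varphi)=0$. Since $Y=Y_0(s)\varphi+O(\varphi^3)$ with $Y_0>0$, for each fixed small $\eta>0$ the equation $Y(s,\varphi)=\eta$ defines $\varphi=\varphi_\eta(s)$ by the implicit function theorem. Then $s\mapsto X(s,\varphi_\eta(s))$ is a perturbation of $X_0$, hence a circle diffeomorphism fixing $0$, and we let $s=\sigma(\xi,\eta)$ be its inverse. Set $\Ang(\xi,\eta)=z(\sigma(\xi,|\eta|))$.

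Smoothness and evenness in $\eta$ follow because $X$ is even and $Y$ is odd in $\varphi$. Consequently $\varphi_\eta(s)$ is odd in $\eta$, and $X(s,\varphi_\eta(s))$ is even and smooth in $\eta$. This gives items (1) and (2) of Definition~\ref{def:angular-function}, with the normalization $\Ang(0,\eta)=\Ang(0,0)$ coming from $X(0,\cdot)=0$.

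For item (3), start from the point $(\xi,\eta)$, which corresponds to the phase point $(s,\varphi)=(\sigma(\xi,\eta),\varphi_\eta)$. Its forward image in Lazutkin coordinates is $(\xi+\eta,\eta)+O(\eta^{2k+2})$. Its backward image, computed from the normal form inverted, is $(\xi-\eta,\eta)+O(\eta^{2k+2})$. The actual collision points $z_\pm$ of the orbit therefore lie within $O(\eta^{2k+2})$ of $\Ang(\xi\pm\eta,\eta)$, because $\sigma$ is Lipschitz. Replacing the true neighbours by these perturbed points changes the angles $\varphi^\pm$ by $O(\eta^{2k+2})/|z_\pm-z_0|=O(\eta^{2k+1})$. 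This loses one order, and I expect it to be the main obstacle.

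To recover the lost order, I would use the parity structure of Remark~\ref{rmk:even-terms-angular}. The defect $\Phi^+-\Phi^-$ is even in $\eta$, so a defect that is $O(\eta^{2k+1})$ is automatically $O(\eta^{2k+2})$. If that argument is insufficient, I would instead apply Lemma~\ref{lem:high-order-lazutkin} with $k+1$ in place of $k$ and truncate.

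\textbf{From (2) to (1).} Given $\Ang$, write $\Ang(\xi,\eta)=z(S(\xi,\eta))$ and set $\varphi(\xi,\eta)=\tfrac12\bigl(\Phi^+(\xi,\eta)-\Phi^-(\xi,\eta)\bigr)$, the angle of the outgoing chord corrected symmetrically. Define $\Psi^{-1}(\xi,\eta)=(S(\xi,\eta),\varphi(\xi,\eta))$. This map is a diffeomorphism near $\eta=0$ for $\eta\ge0$, because $\partial_\eta\varphi|_{\eta=0}=\tfrac12\,\partial_\xi S/\rho$ is nonzero by a direct expansion of chord angles.

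By the pseudocollision property, the billiard map sends $(S(\xi,\eta),\varphi)$ to a phase point whose base point is within $O(\eta^{2k+2})$ of $S(\xi+\eta,\eta)$; this again uses the reflection law and the parity trick above. The conjugated map is therefore $(\xi,\eta)\mapsto(\xi+\eta,\eta)+O(\eta^{2k+2})$ in the $\xi$ component. The $\eta$ component is handled by applying the same reasoning at $\Ang(\xi+\eta,\eta)$ together with the reversibility $f^{-1}=\mathcal J f\mathcal J$.

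Finally, Taylor expanding $\Psi$ in $\varphi$ and invoking the parities gives the form~\eqref{def_alpha}, after truncating terms beyond order $2k+2$.
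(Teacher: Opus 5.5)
Your proposal is correct and is essentially the paper's proof. For (1)$\Rightarrow$(2) you take $\Ang$ to be the boundary component of the symmetrically extended $\Psi^{-1}$; for (2)$\Rightarrow$(1) you invert $(\xi,\eta)\mapsto(\Ang_{\al},\text{averaged chord angle})$, using that its $\eta$-derivative at $\eta=0$ is nonzero, and then truncate.

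Two minor corrections. First, the order loss you anticipate does not actually occur. The $O(\eta^{2k+2})$ displacement of $z_{\pm}$ is along $\partial\Omega$, almost parallel to the chord, and the chord angle is a smooth function of the endpoints' arc-length parameters, with derivative $\approx 1/(2\rho)$, so it moves by only $O(\eta^{2k+2})$. This is what the paper's appeal to smoothness of the billiard map covers, though your parity argument is also valid. Second, in the convention you use in (1)$\Rightarrow$(2), where the defect is $\Phi^{+}-\Phi^{-}$ (as in the paper), the averaged angle is $\tfrac12(\Phi^{+}+\Phi^{-})$. Your $\tfrac12(\Phi^{+}-\Phi^{-})$ would be half the defect, so fix one sign convention for $\Phi^{-}$ throughout.
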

\begin{proof}
  Let us begin by proving the implication (1)$\Rightarrow$(2); let $U$
  denote the neighbourhood of $\{\varphi = 0\}$ where $\Psi$ is
  defined and invertible.  The formulas~\eqref{def_alpha} imply that
  $\Psi(\{\varphi = 0\}) = \{y = 0\}$ and, moreover, allow $\Psi$ to
  be extended to negative values of $\varphi$ in such a way that
  $\Psi(s,\varphi) = J\circ\Psi(s,-\varphi)$, where
  $J:(x,y)\mapsto(x,-y)$.  Such extension (defined on the symmetrized
  neighbourhood $\hat U$) is also smooth.  By continuity there exists
  $\ve$ so that $\Psi(\hat U)\supset\hbA_{\ve}$.  We can thus define
  $\Ang$ on $\hbA_{\ve}$ to be
  $\gamma\circ\pi_{s}\circ\Psi^{-1}(\xi,\eta)$, where $\pi_{s}$ is the
  projection on the first (arc-length) coordinate and $\gamma$ is the
  arc-length parametrization of $\partial\Omega$.  We claim that
  $\Ang$ is an angular function of order $k$.  Items (1) and (2) are
  immediate; by symmetry it suffices to show item (3) only for
  $\eta > 0$: by design, for any $(\xi,\eta)\in\hbA_{\ve}$, the point
  $\Psi^{-1}(\xi+\eta,\eta)$ and $f\circ\Psi^{-1}(\xi,\eta)$ are
  $O(\eta^{2k+2})$-close, and the same holds for
  $\Psi^{-1}(\xi-\eta,\eta)$ and $f^{-1}\circ\Psi^{-1}(\xi,\eta)$.  By
  smoothness of the billiard map, we conclude that the triple
  $(\Ang(\xi-\eta,\eta),\Ang(\xi,\eta),\Ang(\xi+\eta,\eta))$
  identifies a billiard $O(\eta^{2k+2})$-pseudocollision.

  Let us now prove that (2) $\Rightarrow$ (1).  It is convenient to
  write $\Ang$ in coordinates: let
  $\Ang_{\al}:\hbA_{\ve}\to\bT_{\ell}$ be the function mapping
  $(\xi,\eta)$ to the arc-length parametrization of $\Ang(\xi,\eta)$;
  without loss of generality we can assume that $\Ang_{\al}(0,\eta) = 0$.
  The argument used above item suggests that $\Ang_{\al}$ should be
  related to the first component of the inverse of the conjugacy
  $\Psi$.  Our strategy to prove indeed to construct a function
  $\Phi(\xi,\eta)$ in such a way that
  $(\Ang_{\al}(\xi,\eta),\Phi(\xi,\eta))$ is a smooth diffeomorphism on
  $\hbA_{\ve}$ and work from there.  The definition of the function
  $\Phi$ it not unique; we propose a definition that manifestly
  preserves our symmetries.  We take
  \begin{align*}
    \Phi(\xi,\eta) = \frac{\Phi^{+}(\xi,\eta)+\Phi^{-}(\xi,\eta)}{2}.
  \end{align*}
  It was already observed that $\Phi$ is an odd function of $\eta$,
  \ie $\Phi\circ J = -\Phi$; we now need to prove that
  $(\xi,\eta)\mapsto(\Ang_{\al}(\xi,\eta),\Phi(\xi,\eta))$ is
  invertible in a suitable neighbourhood of $\{\eta = 0\}$.  Since
  $\Ang_{\al}(\cdot,\eta)$ is an orientation-preserving
  diffeomorphism, we conclude that
  $\frac{\partial\Ang_{\al}}{\partial\xi} > 0$ everywhere.  Moreover,
  since $\Ang_{\al}(\cdot,\eta) = \Ang_{\al}(\cdot,-\eta)$, we
  conclude that $\frac{\partial\Ang_{\al}}{\partial\eta}|_{\eta = 0} = 0$.
  In order to conclude invertibility in some neighbourhood of
  $\{\eta = 0 \}$, it is then enough to show that
  $\frac{\partial\Phi}{\partial\eta}|_{\eta = 0}\neq 0$; it is not
  difficult to check (see e.g.~\cite[(14.1b)]{MR1239173}) that the
  following estimate holds everywhere (here $\rho_{\min}$
  {\resp{${\rho_{\max}}$}} denotes the minimum {\resp{maximum}} of
  $\rhoal$):
  \begin{align*}
    2\rho_{\min}|\Phi^{\pm}|\le |\Ang_{\al}^{\pm}-\Ang_{\al}|\le 2\rho_{\max}|\Phi^{\pm}|.
  \end{align*}
  Hence, if we can show that
  $\pm\frac{\partial\Ang_{\al}^{\pm}}{\partial\eta} > 0$, we can conclude
  that $\frac{\partial\Phi^{\pm}}{\partial\eta} > 0$.  By definition,
  \begin{align*}
    \frac{\partial\Ang_{\al}^{\pm}}{\partial\eta} =
    \pm\frac{\partial\Ang_{\al}}{\partial\xi} + \frac{\partial\Ang_{\al}}{\partial\eta}.
  \end{align*}
  If $\eta = 0$, once again we have
  $\frac{\partial\Ang_{\al}}{\partial\eta} = 0$, which then proves our
  claim since $\frac{\partial\Ang_{\al}}{\partial\xi} > 0$ everywhere.
  The map $(\Ang_{\al}(\xi,\eta),\Phi(\xi,\eta))$ is therefore a smooth
  orientation-preserving diffeomorphism in a neighbourhood of
  $\{\eta = 0\}$. Its image contains a neighbourhood $\tilde U$ of
  $\{\varphi = 0\}$ in phase space; let $\tilde\Psi$ be its inverse in
  such neighbourhood.  Then $\tilde\Psi$ has the correct symmetries to
  be written in the form~\eqref{def_alpha}, but possibly it has extra
  terms in the expansion; we define $\Psi = (X,Y)$ to be the
  truncation of $\tilde\Psi$ of its Taylor expansion up to terms of
  order $\varphi^{2k}$ (for $X$) and order $\varphi^{2k+1}$ (for $Y$);
  the function $\Psi$ will be a diffeomorphism on a possibly smaller
  neighbourhood of $\{\varphi = 0\}$ that we call $U$.  Since $\Ang$
  was an angular function, $\Phi(\xi,\eta)$ and $\Phi^{\pm}(\xi,\eta)$
  differ by $O(\eta^{2k-2})$, which implies that
  $\Psi\circ f^{\pm 1}(s,\varphi) = (X(s,\varphi)\pm
  Y(s,\varphi),Y(s,\varphi))+O(\eta^{2k+2})$, proving that $\Psi$ is a
  high-order Lazutkin change of coordinates.
\end{proof}
If $\Ang$ is an angular function of order $k$, extracting the
$2k-1$-jet of $\Ang_{\al}$ at $\eta = 0$ we obtain, in particular,
the following expansion:
\begin{align*}
  \Ang_{\al}(\xi,\eta) &= \sum_{j = 0}^{k-1} \Ang_{\al,j}(\xi)\eta^{2j}+O(\eta^{2k}).
\end{align*}
It is clear by Lemma~\ref{lem:angular-lazutkin} that there should be a
relation between the functions $\Ang_{\al,j}$ and the functions
$X_{j}$ and $Y_{j}$ given in~\eqref{def_alpha}.  For
instance, we immediately obtain:
\begin{align}\label{E:X1}
  \Ang_{\al,0}\circ X_{0} &= \text{Id}&
  \Ang_{\al,1} = - \frac{X_{1}}{\dot X_{0}Y_{0}^{2}}\circ \Ang_{\al,0}.
\end{align}
In principle, one can obtain algebraic relations for arbitrary $k$
between the functions $\Ang_{\al,k}$ and $X_{j}$ and $Y_{j}$ for
$0\le j\le k$.  The immediate observation that stems from the above
relation is that $X_{1} \equiv 0$ if and only if
$\Ang_{\al,1} \equiv0$.  We can thus restate our Main Theorem as follows:
\begin{thm}
  The domain $\Omega$ is a disc if and only if $\Ang_{\al,1} \equiv 0$.
\end{thm}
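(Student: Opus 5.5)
The ``if'' direction is immediate: for a disc the formula~\eqref{eq:billiard-map-disc} shows that the linear rescaling $s\mapsto s/\ell$ already conjugates $f$ to the normal form to all orders. By the uniqueness in Lemma~\ref{lem:high-order-lazutkin} we get $X_{1}\equiv 0$, and then~\eqref{E:X1} gives $\Ang_{\al,1}\equiv0$. All the work is in the converse.

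The plan is to compute $\Phi^{+}_{2}$ explicitly in terms of $\Ang_{\al,0}$, $\Ang_{\al,1}$ and the curvature, and use the vanishing condition of Remark~\ref{rmk:even-terms-angular}.

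\textbf{Expanding the chord angle.} Write $\sigma=\Ang_{\al}(\xi,\eta)$ and $\sigma^{+}=\Ang_{\al}(\xi+\eta,\eta)$, so that $h:=\sigma^{+}-\sigma$ is small. Using~\eqref{eq:complex-parametrization}, the chord angle is
\begin{align*}
\arg\int_{0}^{h}e^{i(\theta_{\al}(\sigma+t)-\theta_{\al}(\sigma))}dt .
\end{align*}
With $\kappa=\rhoal^{-1}$, a routine expansion gives
\begin{align*}
\Phi^{+}=\tfrac12\kappa h+\tfrac16\kappa' h^{2}+\Big(\tfrac1{24}\kappa''-\tfrac{1}{?}\kappa^{3}\Big)h^{3}+\cdots,
\end{align*}
with all coefficients evaluated at $\sigma$. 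The cubic term, including the $\kappa^{3}$ coefficient, only needs to be tracked if it contributes at order $\eta^{2}$, and it does not: $h$ itself is of order $\eta$.

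\textbf{Expanding $h$ in $\eta$.} Write $a=\Ang_{\al,0}$ and $b=\Ang_{\al,1}$. Then
\begin{align*}
h=a'\eta+\tfrac12 a''\eta^{2}+O(\eta^{3}).
\end{align*}
The function $b$ first enters at order $\eta^{3}$ through $b(\xi+\eta)-b(\xi)$, and through the base point at order $\eta^{3}$ as well. So $\Phi^{+}_{2}\equiv 0$ only yields
\begin{align*}
\tfrac14\kappa\,a''+\tfrac16\kappa'\,a'^{2}=0
\end{align*}
(with $\kappa$ evaluated at $a(\xi)$). This is precisely the relation that produces Lazutkin's $\rhoal^{-2/3}$ law, i.e.\ $a=X_0^{-1}$.

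\textbf{The order-four condition.} The condition involving $b$ is $\Phi^{+}_{4}\equiv0$, which is available since $\Ang$ has order $k\ge2$. I would expand to $\eta^{4}$, collecting two kinds of contributions:
\begin{itemize}
\item the terms coming from $b$: a multiple of $\kappa\,b''$ plus $b'$ and $b$ terms from the $\kappa'$ factors and from evaluating $\kappa$ at $a+b\eta^{2}$;
\item the remaining terms, which are universal polynomials in $\kappa,\kappa',\kappa'',\kappa'''$ and the derivatives of $a$.
\end{itemize}
Substituting $a'=C\kappa^{-2/3}\circ a$ and setting $b\equiv0$ should leave a single ODE for $\rho=\rhoal$ in arc-length. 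I expect it to have the schematic form
\begin{align*}
c_1\rho'''\rho^{2}+c_2\rho'\rho''\rho+c_3\rho'^{3}+c_4 C^{-2}\rho^{4/3}\rho'=0 .
\end{align*}
Its structure matches $\tilde\alpha_{4}$ in Remark~\ref{rmk:lazutkin-conjugacy-equivalence}, which is expected since $X_{1}$ and $\tilde\alpha_{4}$ should be related. This computation, done carefully with SageMath, is the main computational obstacle, though it is mechanical.

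\textbf{Showing the ODE forces $\rho$ constant.} This is the genuinely delicate step. I would first try to write the ODE as a total derivative, or as $L(\rho)'=0$ after multiplying by a suitable power of $\rho$. Periodicity then gives a conserved quantity, reducing the equation to a second-order autonomous ODE.

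I would then multiply by an appropriate power of $\rho$ (or by $\rho'$) and integrate over $\bT_\ell$, integrating by parts to obtain an identity of the form
\begin{align*}
\int \rho^{m}\rho'^{2}\,w=0
\end{align*}
with a weight $w>0$. This forces $\rho'\equiv0$.

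If such a direct energy identity fails, the fallback is to switch to the Lazutkin variable $x=X_0(s)$, in which the equation should become linear or Hill-type in $u=\rho^{2/3}$ (or a similar power). I would then use the Gauss–Bonnet constraint~\eqref{sec:gauss-bonnet} together with the value of $C_{\El}$ in~\eqref{eq:lazutkin-constant-arc-length}, via a Wirtinger-type inequality on $\bT$, to exclude every nonconstant periodic solution. The resonance between the constant $C^{-2}$ and the period is exactly where the disc must emerge as the only solution.
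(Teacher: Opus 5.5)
Your reduction to an ODE follows the paper's route. The paper performs the order-$\eta^{4}$ expansion in Lazutkin coordinates (Lemma~\ref{P:alpha}), and the resulting equation with $\Aang''=0$, multiplied by $\rhoL^{3}$, has exactly your schematic form; in arc-length the constant $\Cl$ in fact drops out.

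\textbf{The gap is in the last step.} Your main plan, an integration-by-parts identity $\int\rho^{m}\rho'^{2}w=0$ with $w>0$, cannot work, because the equation has nonconstant positive periodic solutions. After the integration you anticipate, with $h=\rhoL^{-1/3}/2\sqrt2$ and the unrenormalized Lazutkin variable $t$, the equation reads $h_{tt}=-h^{3}+\ccbv h$. After normalizing, every level set $\mathcal H=E\in(0,1/4)$ contains a closed orbit in $\{\zeta>0\}$. Each such orbit is a positive, periodic, nonconstant curvature function, and by varying $\ccbv$ its period can be arbitrary. An identity derived from the ODE and periodicity alone would also apply to these solutions, so it cannot force $\rho'\equiv0$.

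What excludes them is global. By Gauss--Bonnet the tangent must turn by exactly $2\pi$ over one period of $\partial\Omega$. The computation in Lemma~\ref{lem:uniqueness} shows that along each such orbit the tangent turns by $2\sqrt2\int_{0}^{T(E)}\zeta\,d\tau=4\pi$ per prime period, for every $E$, so~\eqref{eq:normalized-constraints} fails. This is the idea your proposal is missing.

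\textbf{Your fallback rests on a false premise.} You correctly sense that Gauss--Bonnet and a resonance must enter. However, in the Lazutkin variable the equation is the cubic oscillator above. It is not linear, nor of Hill type, in $\rho^{2/3}$ or any other power of $\rho$. So, as stated, there is no linear problem for a Wirtinger inequality to act on.

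The variable that linearizes the equation is the tangent angle. Since $d\theta=\rho^{-1/3}dt=2\sqrt2\,h\,dt$, setting $w=h^{2}=\rho^{-2/3}/8$ turns $h_{tt}=-h^{3}+\ccbv h$ into $4w_{\theta\theta}+w=\ccbv$. Its nonconstant solutions $\ccbv+a\cos(\theta/2)+b\sin(\theta/2)$ have period $4\pi$ in $\theta$, which is the paper's $\Theta(E)=4\pi$. Since $w$ must be $2\pi$-periodic in $\theta$, the function $v=w-\ccbv$ has zero mean and satisfies $\int_{0}^{2\pi}v_{\theta}^{2}=\frac14\int_{0}^{2\pi}v^{2}$. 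This contradicts Wirtinger's inequality unless $v\equiv0$.

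With this substitution your Wirtinger idea would close the argument, more directly than the paper's quadrature. Without it, the proof is incomplete.
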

In order to further simplify the computations that will follow, it
turns out to be convenient to express $\Ang$ in Lazutkin coordinates.
Let $\Ang_{\El}:\hbA_{\ve}\to\bT$ be the function that maps
$(\xi,\eta)$ to the Lazutkin coordinate corresponding to the point
$\Ang(\xi,\eta)$.  Of course we can write a new expansion:
\begin{align*}
  \Ang_{\El}(\xi,\eta) &= \sum_{j = 0}^{k-1}\Ang_{\El,j}(\xi)\eta^{2j}+O(\eta^{2k}).
\end{align*}
The advantage in the above expression is that~\eqref{eq:lazutkin}
implies that
$\Ang_{\El,0} = X_{0}\circ\Ang_{\al,0} = \text{Id}$, hence:
\begin{align*}
  \Ang_{\El}(\xi,\eta) &= \xi + \sum_{j = 1}^{k-1}\Ang_{\El,j}(\xi)\eta^{2j}+O(\eta^{2k}).
\end{align*}
Observe that $\Ang_{\al,1}\equiv 0$ if and only if
$\Ang_{\El,1}\equiv 0$.  According to
Remark~\ref{rmk:even-terms-angular}, in order to find an equation
involving $\Ang_{\El,1}$, we need to set to zero the even terms of the
expansion~\eqref{eq:Phi-pm-expansion}.  This is the content of the
following lemma.  Let $\rhoL:\bT\to\bR_{ > 0}$ denote the radius of
curvature of $\partial\Omega$ in the Lazutkin parametrization.  We
will denote with a \emph{prime} differentiation with respect to the
Lazutkin parameter (recall that a \emph{dot} denoted differentiation
with respect to arc-length).  In order to simplify the notation we
also denote by $\Aang = \Ang_{\El,1}$.  It is also useful to rewrite
some formulas in the Lazutkin parametrization.  It is a simple
exercise to check that~\eqref{eq:complex-parametrization} can be
written as (recall~\eqref{eq:lazutkin}):
\begin{align}\label{eq:complex-lazutkin-param}
  z(x) = \frac1{\Cl}\int_{0}^{x}e^{i\theta_{\El}(x')}\rhoL^{2/3}(x')dx',
\end{align}
where $\theta_{\El}$ is the angle $\theta$ expressed in Lazutkin
parametrization; notice that the formula corresponding
to~\eqref{eq:theta-rho-relation} reads:
\begin{align}
  \label{eq:theta-rho-relation-lazutkin}
  \theta_{\El}' = \frac1{\Cl}\rhoL^{-1/3};
\end{align}
in particular, the Gauss–Bonnet formula reads $\int_{\bT}\rhoL^{-1/3} = 2\pi\Cl$.
\begin{lem}\label{P:alpha}
Recall the expansion:
\begin{align*}
  \Phi^{+}(\xi,\eta) = \sum_{j = 1}^{2k+1}\Phi_{j}^{+}(\xi)\eta^{j}+o(\eta^{k}).\end{align*}
Then
\begin{align*}
  \Phi^{+}_{2} &=  0\\
  \Phi^{+}_{4} &=
  \frac{1}{4 C_{\El}\rhoL^{1/3}}\cdot\left(
  \Aang''
  +\frac{8}{135} \left(\frac{\rhoL'}{\rhoL}\right)^{3}
  -\frac{11}{135} \frac{\rhoL'\rhoL''}{\rhoL^2}
  +\frac{1}{45} \frac{\rhoL'''}{\rhoL}
  +\frac{1}{180 \Cl^2} \frac{\rhoL'}{\rhoL^{5/3}}\right),
\end{align*}
where all the functions are evaluated in (and differentiated with
respect to) the variable $\xi$.
\end{lem}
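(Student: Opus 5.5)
We will compute the Taylor expansion of $\Phi^{+}$ in $\eta$ by direct expansion in the Lazutkin parametrization, using \eqref{eq:complex-lazutkin-param} and \eqref{eq:theta-rho-relation-lazutkin}. Write $\Ang_{\El}(\xi,\eta) = \xi + \Aang(\xi)\eta^{2} + O(\eta^{4})$. Then
\begin{align*}
  \Ang_{\El}^{+}(\xi,\eta) - \Ang_{\El}(\xi,\eta) = \eta + \bigl(\Aang(\xi+\eta)-\Aang(\xi)\bigr)\eta^{2} + O(\eta^{5}) = \eta + \Aang'\eta^{3} + \tfrac12\Aang''\eta^{4}+O(\eta^{5}),
\end{align*}
so the increment in the Lazutkin parameter is $h := \eta + \Aang'\eta^{3}+\frac12\Aang''\eta^{4}+O(\eta^5)$. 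The base point is $x := \Ang_{\El}(\xi,\eta) = \xi + \Aang\eta^2+O(\eta^4)$. Using \eqref{eq:complex-lazutkin-param},
\begin{align*}
  \delta z^{+} = \frac1{\Cl}\int_{0}^{h} e^{i(\theta_{\El}(x+t)-\theta_{\El}(x))}\rhoL^{2/3}(x+t)\,dt .
\end{align*}
We Taylor expand the integrand in $t$ around $x$, writing $\theta_{\El}(x+t)-\theta_{\El}(x) = \sum_{m\ge1}\theta_{\El}^{(m)}(x)t^m/m!$. Each derivative $\theta_{\El}^{(m)}$ is expressed through $\rhoL$ and its derivatives via $\theta_{\El}' = \Cl^{-1}\rhoL^{-1/3}$. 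We integrate termwise to get $\re\delta z^{+}$ and $\im\delta z^{+}$ as polynomials in $h$ up to order $h^{5}$. The quotient $\im/\re$ starts at order $h$, so $\arctg$ only needs its cubic and quintic corrections. This yields
\begin{align*}
  \Phi^{+} = G(x,h) = \sum_{m=1}^{5} g_m(x)h^m + O(h^6),
\end{align*}
with $g_1 = \theta_{\El}'/2 = \frac{1}{2\Cl\rhoL^{1/3}}$, and with $g_m$ explicit in $\rhoL,\rhoL',\rhoL'',\rhoL'''$.

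Next we substitute $h$ and $x$ in terms of $\eta$ and $\xi$ and re-expand in $\eta$. Since $x-\xi = O(\eta^2)$ and $h-\eta = O(\eta^3)$, the corrections are as follows.
\begin{itemize}
\item $\Phi^{+}_{2} = g_2(\xi)$. This must be identically zero for the Lazutkin parametrization; that is exactly the classical property characterizing \eqref{eq:lazutkin}, and the computation will confirm that $g_2$ vanishes once $\theta_{\El}'\propto\rhoL^{-1/3}$, given the weight $\rhoL^{2/3}$ in the integrand. Note that $\Aang$ cannot contribute at this order.
\item $\Phi^{+}_{4}$ collects four kinds of contributions: $g_4(\xi)$; $g_1\cdot\frac12\Aang''$ from $h$; $3g_3 \cdot 0$-type cross terms, which vanish since $h$ has no $\eta^2$ term; and $g_2'(\xi)\Aang$ and $2g_2\Aang'$ terms, which vanish because $g_2\equiv0$.
\end{itemize}
The only $\Aang$-dependence is therefore $\frac12 g_1\Aang'' = \frac{\Aang''}{4\Cl\rhoL^{1/3}}$, which matches the claimed prefactor. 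The rest of the claim reduces to verifying that $g_4$ equals
\begin{align*}
  \frac{1}{4\Cl\rhoL^{1/3}}\Bigl(\tfrac{8}{135}(\rhoL'/\rhoL)^3-\tfrac{11}{135}\rhoL'\rhoL''/\rhoL^2+\tfrac1{45}\rhoL'''/\rhoL+\tfrac{1}{180\Cl^2}\rhoL'/\rhoL^{5/3}\Bigr).
\end{align*}

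The main obstacle is this last verification: the bookkeeping of $g_4$, which involves $\theta_{\El}^{(m)}$ for $m\le 4$, the Taylor coefficients of $\rhoL^{2/3}$ up to third order, and the cubic term of $\arctg$ (the latter producing the $\Cl^{-2}$ term, via $(\theta_{\El}')^3\cdot\theta_{\El}''$-type products). To keep this manageable, we will factor $\delta z^{+} = \frac{h\rhoL^{2/3}}{\Cl}(a + ib)$ and note that $\arctg(b/a)$ is unaffected by the real prefactor. We then expand $a = 1 + a_1h + a_2h^2+a_3h^3$ and $b = b_0h + b_1 h^2 + b_2h^3+b_3h^4$, so that only $b/a$ and $(b/a)^3$ need to be tracked to relative order $h^3$. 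The resulting polynomial identity in $\rhoL$ and its derivatives is mechanical, and we would check it symbolically (as the authors do with SageMath). As a consistency check, the constant-$\rhoL$ case must give $\Phi^{+}_4 = \frac{\Aang''}{4\Cl\rhoL^{1/3}}$, in agreement with the disc where $\Phi^+ = \eta/(2\Cl\rhoL^{1/3})$ exactly when $\Aang=0$.
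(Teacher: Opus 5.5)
Your proposal is correct, and it organizes the computation differently from the paper.

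\textbf{How the two routes differ.} The paper (Lemma~\ref{L:appendix}) expands the chord integral directly in $\eta$, with the $\Aang$-dependent endpoints $x_0=\xi+\Aang(\xi)\eta^2$ and $x_+=\xi+\eta+\Aang(\xi+\eta)\eta^2$ built in. As a result, $\Aang'$ and $\Aang''$ enter the coefficients $\tilde S_4,\tilde S_5,\tilde C_3,\tilde C_4$. Only after the quotient formula is applied does one see these collapse to $\frac14\Aang''$.

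You instead first compute the intrinsic chord-angle function $G(x,h)$. You then compose with $x=\xi+\Aang\eta^2+O(\eta^4)$ and $h=\eta+\Aang'\eta^3+\frac12\Aang''\eta^4+O(\eta^5)$. Together with $g_2\equiv0$, this gives $\Phi^+_4=g_4+\frac12 g_1\Aang''$ at once.

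\textbf{What your route buys.} It explains structurally why no $\Aang$ or $\Aang'$ terms can survive at order four. It also justifies dropping the $O(\eta^4)$ part of $\Ang_{\El}$, which the appendix truncates without comment. The claim $g_2\equiv0$, which you assert, is a one-line check: $a_1=\frac13\rhoL'/\rhoL$ and $b_1=\frac{1}{6\Cl\rhoL^{1/3}}\rhoL'/\rhoL$, so $b_1-b_0a_1=0$.

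\textbf{What remains.} The quantity $g_4$ is exactly the $\Aang=0$ case of the paper's computation. Both you and the paper leave this algebra as mechanical, so the amount deferred is the same. It does close: one gets $g_4=\frac{1}{\Cl\rhoL^{1/3}}\bigl(\frac{2}{135}(\rhoL'/\rhoL)^3-\frac{11}{540}\rhoL'\rhoL''/\rhoL^2+\frac{1}{180}\rhoL'''/\rhoL+\frac{1}{720\Cl^2}\rhoL'/\rhoL^{5/3}\bigr)$. This is precisely the $\Aang$-free part of the claimed formula.

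\textbf{One correction to your bookkeeping.} The cubic term of $\arctg$ does not contribute to $g_4$. Write $b/a=w_1h+w_2h^2+w_3h^3+w_4h^4+\dots$. The $h^4$-coefficient of $(b/a)^3$ is $3w_1^2w_2$, and $w_2=g_2=0$. So $g_4$ is simply the $h^4$-coefficient of $b/a$; this is the paper's remark that $\Phi^+_4$ equals the fourth Taylor coefficient of $\tg\Phi^+$.

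The $\Cl^{-2}$ term comes instead from the nonlinear terms of the exponential, i.e.\ from $\cos$ and $\sin$ of $\theta_{\El}(x+t)-\theta_{\El}(x)$. Relative to leading order, these put contributions proportional to $\Cl^{-2}\rhoL^{-2/3}$ into $a_2,a_3,b_2,b_3$. Your misattribution does not break the argument, since tracking $(b/a)^3$ would just produce zero at this order, but you can drop that step.
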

\begin{proof}
  Let us recall the definition of $\Phi^{+}$ from~\eqref{eq:Phi-pm};
  we now rewrite the complex vector $\delta z^{\pm}$
  using~\eqref{eq:complex-lazutkin-param}
  and~\eqref{eq:theta-rho-relation-lazutkin} obtaining:
\begin{align}\label{eq:Phi-pm-lazutkin}%
  \delta z^{\pm}(\xi,\eta) =
  \frac{e^{-i\theta_{\El}(\Ang_{\El}(\xi,\eta))}}{C_{\El}}\int_{\Ang_{\El}(\xi,\eta)}^{\Ang_{\El}^{\pm}(\xi,\eta)}e^{i\theta_{\El}(x)}\rhoL^{2/3}(x)dx.
\end{align}
Since $\Ang_{\El,0} = \text{Id}$ and we are in Lazutkin
parametrization, we conclude that $\Ang_{\El,0}$ is an angular
function of order $1$, hence $\Phi^{+}_{2} = 0$ (this follows also
from an explicit computation).

Since moreover $\arctg$ is an odd function of its argument, it holds
that $\Phi_{4}^{+}$ equals
the fourth Taylor coefficient of $\tg\Phi^{+}$; in other words, we can
avoid taking the arctg in order to compute the required coefficient.

Applying Lemma~\ref{L:appendix} we can thus write:
\begin{align*}
  \tg\Phi^{+}(\xi,\eta) =\frac{S_2\eta^{2}+S_3\eta^{3}+S_4\eta^{4}+S_{5}\eta^{5}}{C_1\eta+C_2\eta^{2}+C_3\eta^{3}+C_4\eta^{4}}+O(\eta^{5})
\end{align*}
The fourth-order term in this expansion is thus given by (see once
again Lemma~\ref{L:appendix} for the definitions of the functions
$\tilde S_{j}$ and $\tilde C_{j}$):
\begin{align*}
  \Phi^{+}_{4}&= C_{\El}^{-1}\rho^{-1/3}\cdot\left(\tilde S_{5}-\tilde
                C_{2}\tilde S_{4}+(\tilde C_{2}^{2}-\tilde
                C_{3})\tilde S_{3}-(\tilde C_{2}^{3}-2\tilde
                C_{2}\tilde C_{3}+\tilde C_{4})\tilde S_{1}\right).
\end{align*}
Carrying out the algebra yields the desired formula.
\end{proof}
We conclude that, in order for $\Ang_{\El}$ to be an angular function
of order $2$, the function $\Aang = \Ang_{\El,1}$ solves the following
differential equation:
  \begin{align}\label{eq:alpha-ODE}
    \Aang''
    +\frac{8}{135} \left(\frac{\rhoL'}{\rhoL^3}\right)^{3}
    -\frac{11}{135} \frac{\rhoL'\rhoL''}{\rhoL^2}
    +\frac{1}{45} \frac{\rhoL'''}{\rhoL}
    +\frac{1}{180 \Cl^2} \frac{\rhoL'}{\rhoL^{5/3}}=0.
  \end{align}
  with periodic boundary conditions and (by our choice of
  normalization) $\Aang(0) = 0$.  We will study solutions the
  differential equation in the next section.
  \begin{rmk} Among the four terms involving $\rho$, three have
    differential degree $3$ and the remaining term has differential
    degree $1$ (once renormalized with $\Cl^{2}\rho^{2/3}$).  Terms
    with different differential degrees come from different terms in
    the expansion of the complex exponential function
    in~\eqref{eq:Phi-pm-lazutkin}.
  \end{rmk}
  \begin{rmk}\label{rmk:relation-with-Lazutkin}
    Observe that~\eqref{eq:alpha-ODE} is equivalent to the equation
    \begin{align*}
      \frac14A''+ \tilde\beta_{4} = 0,
    \end{align*}
    where $\tilde\beta_{4}$ is defined in
    Remark~\ref{rmk:lazutkin-conjugacy-equivalence}.  This is of
    course to be expected, since $\tilde\beta_{4}$ is the correction
    of order $4$ to the angular component $y$ of the billiard map in
    classical Lazutkin coordinates, and $A$ is exactly designed to
    compensate such correction.
  \end{rmk}
\section{Proof of the Main Theorem}
In this section we prove the Main Theorem; we begin by some elementary
manipulations
\subsection{Solving the differential equation~\eqref{eq:alpha-ODE}}
Performing the convenient substitution $\rrr=
\rhoL^{-1/3}/2\sqrt2$
in~\eqref{eq:alpha-ODE}, we obtain:
\begin{align}\label{eq:ODE-third-degree}
  \nonumber \Aang'' &= \frac1{15}\frac{\rrr'''}{\rrr} -
  \frac1{15}\frac{\rrr'\rrr''}{\rrr^2} + \frac{2}{15\Cl^2}\rrr\rrr' = \\
&= \frac1{15} \left[\frac{h''}{h}+\frac{h^{2}}{\Cl^{2}}\right]'.
\end{align}
Observe that, with this substitution, the Gauss–Bonnet
formula~\eqref{sec:gauss-bonnet} amounts to a normalization condition
for $h$ that we write as follows:
\begin{align}\label{eq:lazutkin-constant}
  2\sqrt{2}\int_{\bT}\rrr d\xi &= 2\pi \Cl.
\end{align}
Integrating once the above differential equation, we therefore obtain
\begin{align}\label{eq:alpha-prime}
  \Aang' =  \frac1{15}
  \left[\frac{h''}{h}+\frac{\rrr^{2}}{\Cl^{2}} - \cbv\right],
\end{align}
where $\cbv$ is an integration constant that can be fixed in such a
way to ensure periodicity of $\Aang$ — that is, we set $\int\Aang' = 0$,
which yields, after integrating by parts, the manifestly positive
quantity:
\begin{align}\label{eq:cbv}
  \cbv &= \int_{\bT}\left[\left(\frac{h'}{h}\right)^{2}+\frac{h^{2}}{\Cl^{2}}\right]d\xi.
\end{align}
Thus, integrating one more time and imposing the condition
$\Aang(0) = 0$, we conclude:
 \begin{equation}
\label{E:5.1} 15\,\Aang(\xi) = \int_0^\xi\frac{\rrr''(\xi')}{\rrr(\xi')} d\xi' + \frac{1}{\Cl^2} \int_0^{\xi} \rrr(\xi')^2 d\xi' - \cbv\xi,
\end{equation}
\begin{rmk}
  It may be useful for future work to explicitly write the
  expression~\eqref{eq:alpha-prime} in terms of the curvature
  function: here derivatives are with respect to Lazutkin coordinates:
  \begin{align*}
    A' = \frac4{135}\left(\frac{\rhoL'}{\rhoL}\right)^{2}-\frac1{45}\frac{\rhoL''}{\rhoL}+\frac1{120\Cl\rhoL^{2/3}}-\frac{\cbv}{15}.
  \end{align*}
\end{rmk}
\begin{rmk}
  It is instructive to observe that the quantity $\cbv$ has some
  intrinsic meaning.  Reverting in~\eqref{eq:cbv} the substitution for
  $\rhoL$ yields
  \begin{align*}
    \cbv = \frac1{72}\int_{\bT}8\left(\frac{\rhoL'}{\rhoL}\right)^{2}+\frac9{\Cl^{2}}\rhoL^{-2/3}d\xi;
  \end{align*}
changing variables to arc-length and
recalling~\eqref{eq:lazutkin-constant-arc-length} gives:
  \begin{align*}
    \cbv = \frac1{72}\int_{\bT_{\ell}}\rhoal^{-2/3}ds\cdot\int_{\bT_{\ell}}\frac{8\dot\rhoal^{2}+9}{\rhoal^{4/3}}ds.
  \end{align*}
  The two integrals appearing in the above expression are (multiples
  of) two of the so-called Marvizi–Melrose invariants
  $\mathcal{I}_{1}$ (which we called $\Cl^{-1}$)
  and $\mathcal{I}_{2}$ (see
  \eg~\cite[(4.6)]{MMelose}, where they are defined in terms of the
  curvature $\kappa = \rhoal^{-1}$ or~\cite[Section 2.5]{MR3392661}).
  Such quantities are Laplace-spectral and Marked-Length-Spectral
  invariants.

  This perhaps surprising relation is of course not accidental.  As
  remarked in Remark~\ref{R:alpha_DKW}, the function $A$ appears (in
  some form) in the asymptotic distribution of collision points of
  periodic orbits; this in turn is tightly related with the
  asymptotics of the length of periodic orbits of small rotation
  number –or, equivalently, the coefficients of the Mather's $\beta$
  function at $0$– and it is well known (see once again~\cite[Theorem
  5.15]{MMelose} and~\cite{MR3392661}) that such coefficients are
  algebraically related with the Marvizi–Melrose invariants.
\end{rmk}

\subsection{Characterizing the disc}
We now assume that $\Aang\equiv 0$ and write an equation for $\rrr$;
substituting $\Aang''\equiv0$ in~\eqref{eq:ODE-third-degree} we obtain
the equation:
\begin{align*}
  \left[\frac{\rrr''}{\rrr}+\frac{\rrr^{2}}{\Cl^{2}}\right]' = 0.
\end{align*}
Observe that the constant $\Cl$ appearing in the above equation itself
depends on the solution $\rrr$ via the
relation~\eqref{eq:lazutkin-constant}; since $\Cl$ amounts to a
normalization, this complication can be avoided by writing the
equation in terms of the unrenormalized Lazutkin parameter.  In other
terms, performing the substitution $t = \Cl\xi$ (and denoting the
derivative with respect to $t$ with a subscript) we obtain:
\begin{align*}
  \Cl^{-3}\left[\frac{h_{tt}}{h}+h^{2}\right]_t = 0,
\end{align*}
in which the dependence on $\Cl$ can be factored out.  We now
integrate once with respect to $t$ and multiply by $\rrr$ (since
$\rrr \neq 0$) obtaining:
\begin{equation}
\label{E:6.1}
    \rrr_{tt} = -\rrr^3 + \ccbv\rrr,
\end{equation}
where $\ccbv$ is a constant of integration.  Any \emph{positive}
solution of~\eqref{E:6.1} (depending on the parameter $\ccbv$)
identifies a curvature function $\rho(t) = h(t)^{-3}/8$; in turn any
curvature function identifies a planar curve $\Gamma(t)$ by (the
unrenormalized versions of)~\eqref{eq:complex-lazutkin-param}
and~\eqref{eq:theta-rho-relation-lazutkin}:
\begin{align*}
  \Gamma(t) = \int_{0}^{t}e^{i\int_{0}^{t'}\rho^{-1/3}(t'')dt''}\rho^{2/3}(t')dt'.
\end{align*}
We will declare a solution $\rrr$ to be \emph{admissible} if $\Gamma$
is a Jordan curve (\ie the boundary of a domain).  In particular both
$\Gamma$ and $h$ need to be periodic with the same period (although
the primitive period of $\Gamma$ may be a multiple of the primitive
period of $h$).  Additionally $\rrr$ needs to satisfy the Gauss–Bonnet
formula:
\begin{align}\label{eq:last-gauss-bonnet}
  2\sqrt2\oint hdt = 2\pi,
\end{align}
where $\oint$ denotes the integral on the period of
$\Gamma$.\footnote{ Of course the mere periodicity of $h$
  and~\eqref{eq:last-gauss-bonnet} are only necessary for $\Gamma$ to
  be Jordan but not sufficient. }  We will conclude this section with
the proof of the following lemma, from which our Main Theorem ensues.
\begin{lem}\label{lem:uniqueness}
  For any $\ccbv > 0$, the differential equation~\eqref{E:6.1} has a
  unique admissible periodic solution: this solution is given by the
  constant $h \equiv \sqrt{\ccbv}$.
\end{lem}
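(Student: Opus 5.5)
The plan is to study \eqref{E:6.1} as a one-degree-of-freedom Hamiltonian system and to show that every non-constant positive periodic solution turns the tangent by exactly $4\pi$ over one period of $h$. This is incompatible with $\Gamma$ being a Jordan curve.

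First, the constant solution. A constant positive solution of \eqref{E:6.1} satisfies $h^{2}=\ccbv$, so $h\equiv\sqrt{\ccbv}$. Then $\rho$ is constant and $\Gamma$ is a circle; \eqref{eq:last-gauss-bonnet} then just fixes the period of $\Gamma$ in terms of $\ccbv$, so this solution is admissible.

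Next, the phase portrait. Multiplying \eqref{E:6.1} by $h_t$ gives the conserved energy
\begin{align*}
  E=\tfrac12 h_t^{2}+V(h),\qquad V(h)=\tfrac14 h^{4}-\tfrac{\ccbv}{2}h^{2}.
\end{align*}
The potential $V$ is a double well with minima at $h=\pm\sqrt{\ccbv}$, minimum value $-\ccbv^{2}/4$, and a local maximum $V(0)=0$. A positive non-constant periodic solution must therefore oscillate inside the right well, with $E\in(-\ccbv^{2}/4,0)$ and turning points $0<h_{-}<h_{+}$ satisfying $V(h_{\pm})=E$. Call its minimal period $T$.

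The key step is to compute the tangent turning over one period of $h$,
\begin{align*}
  \Theta = 2\sqrt2\int_{0}^{T}h\,dt .
\end{align*}
Using $dt=dh/\sqrt{2(E-V(h))}$, splitting the period into its rising and falling halves, and substituting $u=h^{2}$, I expect
\begin{align*}
  \int_0^T h\,dt \;=\; 2\int_{h_-}^{h_+}\frac{h\,dh}{\sqrt{2E+\ccbv h^{2}-h^{4}/2}}
  \;=\;\int_{u_-}^{u_+}\frac{du}{\sqrt{2E+\ccbv u-u^{2}/2}} .
\end{align*}
The radicand is now a quadratic in $u$ with leading coefficient $-\tfrac12$, and $u_\pm$ are its two roots. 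The standard identity $\int_{u_-}^{u_+}\frac{du}{\sqrt{a(u-u_-)(u_+-u)}}=\pi/\sqrt a$ with $a=\tfrac12$ then gives $\int_0^T h\,dt=\sqrt2\,\pi$, independently of $E$ and $\ccbv$. Hence $\Theta=4\pi$ for every non-constant solution. As sanity checks, this matches the small-oscillation limit: the linearized period is $2\pi/\sqrt{2\ccbv}$, giving $\Theta\to 2\sqrt2\cdot\sqrt{\ccbv}\cdot 2\pi/\sqrt{2\ccbv}=4\pi$. It also matches the homoclinic limit $h=\sqrt{2\ccbv}\,\mathrm{sech}(\sqrt{\ccbv}\,t)$, whose integral is $\sqrt2\pi$.

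Finally, the contradiction. For a non-constant solution, $\rho$ has minimal period $T$, so the period of $\Gamma$ must be an integer multiple $NT$ with $N\ge1$. Then \eqref{eq:last-gauss-bonnet} would force $N\Theta=4\pi N=2\pi$, which is impossible. So no non-constant positive periodic solution is admissible, and the constant one is the only admissible solution.

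The main obstacle is the turning-angle computation: one must verify carefully that the substitution $u=h^{2}$ really produces a quadratic radicand, with $u_\pm=h_\pm^{2}$ as its roots lying in $(0,2\ccbv)$, and that the two halves of the period are counted correctly. Once that is in place, the rest is bookkeeping on periods. If the exact value $4\pi$ somehow failed, the fallback would be to show $\Theta>2\pi$ uniformly in $E$ and combine this with the four vertex theorem to exclude $N=1$; the explicit integral should make this unnecessary.
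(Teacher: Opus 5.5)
Your proof is correct and follows essentially the same route as the paper: the same conserved energy, the turning angle $2\sqrt2\int_0^T h\,dt$ computed to be exactly $4\pi$ for every non-constant positive periodic solution regardless of energy, and the same contradiction with the Gauss--Bonnet constraint, since the period of $\Gamma$ must be a multiple of the period of $h$. The only difference is cosmetic. The paper first rescales $h=\sqrt{\ccbv}\,\zeta$, $\tau=\sqrt{\ccbv}\,t$ to remove $\ccbv$ and then substitutes $u=\zeta^2-1$, $v=u/(2\sqrt E)$, whereas your substitution $u=h^2$ with the standard integral $\pi/\sqrt{a}$ does the same computation without normalizing.
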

\begin{proof}
  First of all, it is obvious to check that the constant solution
  $\rrr\equiv\sqrt{\ccbv} $ satisfies~\eqref{E:6.1}. In that case
  $\Gamma$ is a circle (of radius $\ccbv^{-3/2}/8$) and $\rrr$ is
  certainly admissible.

  Before continuing with the proof, it is first convenient to
  normalize~\eqref{E:6.1} one last time, so that $\ccbv$ does not
  appear in the equation.  Let $\rrr = \sqrt{\ccbv}\zeta$ and
  $\tau = \sqrt{\ccbv}t$ so that~\eqref{E:6.1} reads:
  \begin{align}\label{eq:normalized-ode}
    \zeta_{\tau\tau} = -\zeta^{3}+\zeta.
  \end{align}
  Observe that with this substitution, the Gauss–Bonnet
  constraint~\eqref{eq:last-gauss-bonnet} still reads
  \begin{align}\label{eq:normalized-constraints}
    2\sqrt2\oint\zeta d\tau = 2\pi.
  \end{align}
  The proof of the lemma will be complete if we can show that any
  non-constant periodic solution $\zeta$ of~\eqref{eq:normalized-ode}
  fails to satisfy~\eqref{eq:normalized-constraints}.  Let us consider the
  following function of $\zeta$ and $\zeta_{\tau}$:
  \begin{align*}
    \mathcal H(\zeta,\zeta_{\tau}) := \frac12\zeta_{\tau}^2 +
    \frac{\zeta^{4}}{4} - \frac{\zeta^{2}}{2} +\frac14.
  \end{align*}
  An elementary computation shows that $\mathcal H$ is a conserved
  quantity (in fact, $\mathcal H$ is a Hamiltonian function for the
  system given by \eqref{E:6.1}).  We thus study the level sets
  $\mathcal H = E$, which correspond to possible trajectories
  of~\eqref{E:6.1}.  It is elementary to check that such level sets are
  symmetric with respect to the transformation $\zeta\mapsto -\zeta$;
  moreover the system admits trajectories for any $E \ge 0$; such
  trajectories are always bounded.  If, moreover, $E < 1/4$, such
  trajectories do not traverse $\zeta = 0$.  Observe moreover that
  $E = 0$ corresponds to the constant solution $\zeta\equiv 1$ discussed
  earlier; since we can only admit positive solutions, we will therefore
  assume $E\in(0,1/4)$.

  We can thus solve for $\zeta_{\tau}$, obtaining:
  \begin{align*}
    \zeta_{\tau} = \pm\sqrt{2E - \frac12- \frac{\zeta^{4}}{2} +\zeta^2} =
    \pm\sqrt{2E - \frac{(\zeta^2 - 1)^{2} }{2}}.
  \end{align*}
  This shows that the integral curves of~\eqref{E:6.1} in the plane
  $(\zeta,\zeta_{\tau})$ are closed and symmetric with respect to the axis
  $\zeta_{\tau} = 0$.  The trajectory with energy $E$ oscillates between the two
  positive solutions of the bi-quadratic equation $4E = (\zeta^2 -1)^{2}$,
  which satisfy the relation
  \begin{align*}
    \zeta^{2}_{\pm} = 1 \pm 2\sqrt{E}.
  \end{align*}
  Any fixed $E\in(0,1/4)$ corresponds to a trajectory that is
  periodically oscillating back and forth between $\zeta_{-}(E)$ and
  $\zeta_{+}(E)$; let us denote its prime period by $T(E)$. 

  For given $E$, let us now define the integral
  \begin{align*}
    \Theta(E) 
    = 2\sqrt2\int_{0}^{T(E)} \zeta d\tau.
  \end{align*}
  As observed earlier, this quantity measures the angle between the
  vectors $\Gamma_{\tau}(0)$ and $\Gamma_{\tau}(T(E))$, tangent to the
  curve $\Gamma$ respectively at $\tau = 0$ and $\tau = T(E)$.
  The function $\Theta(E)$ can be computed explicitly (the crucial
  factor 2 accounts for the fact that, in one period, the function
  oscillates from $\zeta_{-}$ to $\zeta_{+}$ and back):
  \begin{align*}
    \Theta(E) = 2\cdot 2\sqrt2\int_{\zeta_{-}}^{\zeta_{+}}\frac{\zeta}{\sqrt{2E - \frac{1}{2}(\zeta^2 -1)^{2}}}d\zeta.
  \end{align*}
  Performing the  substitutions $u = \zeta^2 - 1$ and $v = u/(2\sqrt{E})$
  yields:
  \begin{align*}
    \Theta(E) = 2\sqrt2
    \int_{-2\sqrt{E}}^{2\sqrt{E}}\frac{du}{\sqrt{2E - \frac12u^2}}
    = 4\int_{-1}^{1}\frac{dv}{\sqrt{1-v^{2}}} = 4\pi.
  \end{align*}
  Remarkably, the quantity $\Theta(E)$ is independent of $E$, and
  moreover it is twice $2\pi$.  As observed earlier, the period of
  $\Gamma$ is necessarily a multiple of the period $T(E)$ of $\zeta$:
  we conclude that the left hand side
  of~\eqref{eq:normalized-constraints} equals a positive multiple of
  $4\pi$, hence it never satisfies the equation.  We conclude that no
  non-constant periodic solution is admissible.
\end{proof}
\begin{rmk}
  In the above proof we show that non-constant periodic solutions
  of~\eqref{eq:alpha-ODE} with $A'' = 0$ are curvature functions of
  curves that necessarily self-intersect.  In fact, there are
  additional considerations that would rule the solutions
  inadmissible.  For instance it would possible be to show that for
  any $E\in(0,1/4)$ the curves $\Gamma$ corresponding to the value $E$
  are not closed (see Figure~\ref{fig:data} below for some numerical
  solutions)
  \begin{figure}[!ht]
    \centering
    \includegraphics[width=0.30\textwidth, angle=0]{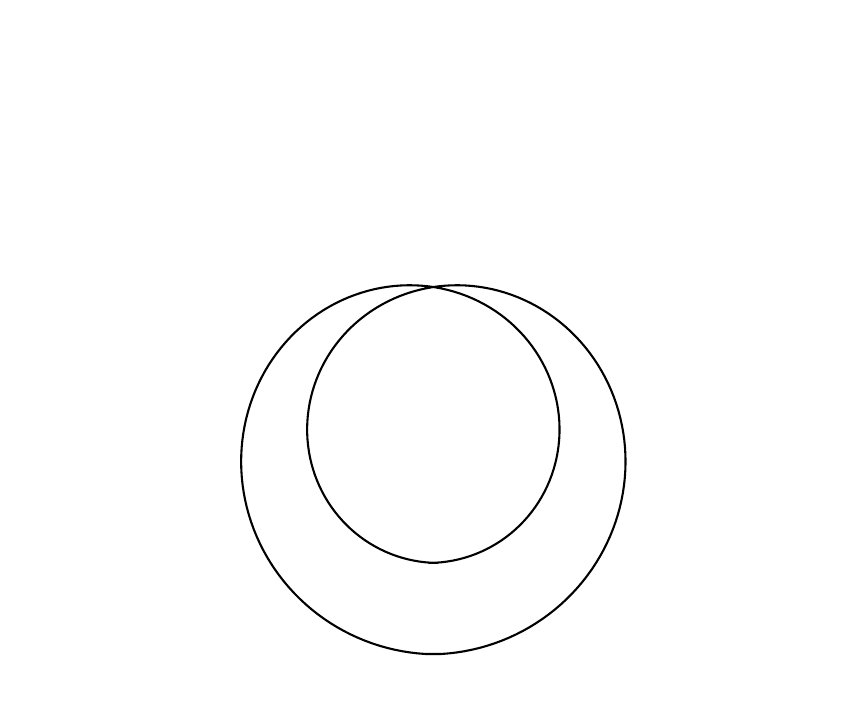}
    \includegraphics[width=0.30\textwidth, angle=0]{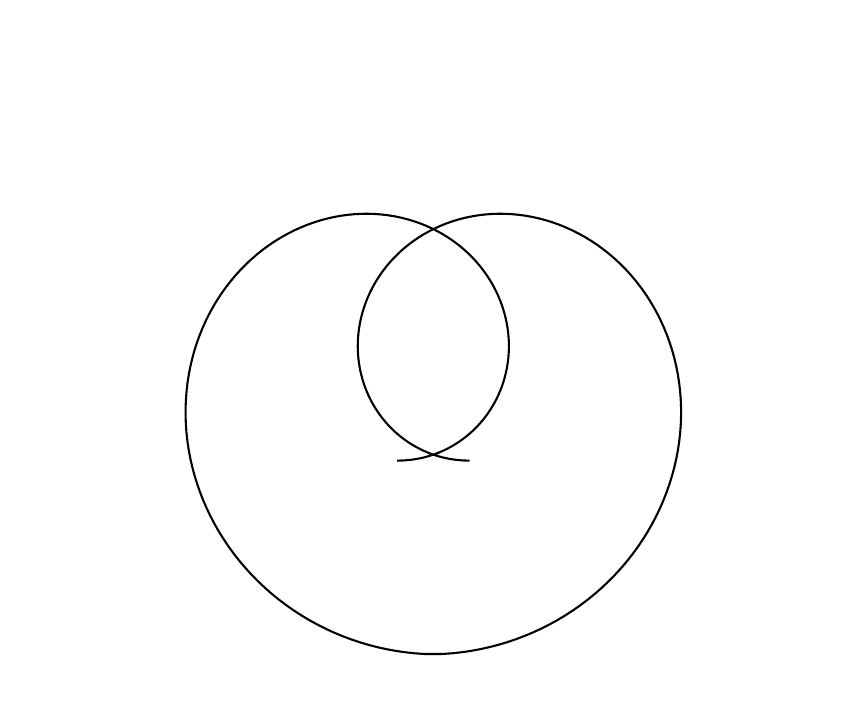}
    \includegraphics[width=0.30\textwidth, angle=0]{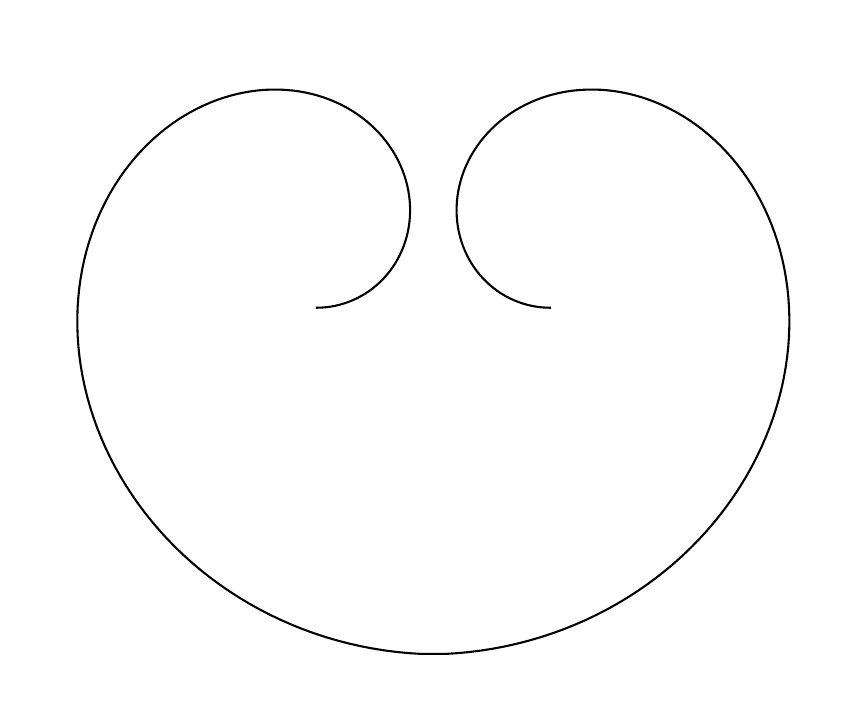}
    \caption{Curves $\Gamma$ corresponding to a full period of the
      solution $\zeta$ with $E = 0.005$, $E = 0.02$ and $E = 0.05$
      respectively (left to right).  The curves appear not to be
      periodic, and to complete an angle of $4\pi$.}
    \label{fig:data}
  \end{figure}
\end{rmk}

\section{Computing $X_{1}$}
Since $A=Z_{\El, 1}$, formulae \eqref{E:X1} and \eqref{E:5.1} can be exploited to find $X_1$. Indeed, let us recall that
\begin{align*}
  Z_{\El, 1}(\xi)=\Aang(\xi) = \frac{1}{15}\int_0^\xi\frac{\rrr''(\xi')}{\rrr(\xi')} d\xi' + \frac{1}{15\Cl^2} \int_0^{\xi} \rrr(\xi')^2 d\xi' - \frac{1}{15}\cbv\xi,
\end{align*}
where all derivatives are with respect to Lazutkin. First let us substitute $\xi' = X_0(s')=C_{\El} \int_0^{s'} \rho_{\al}^{-2/3}(\zeta)d\zeta$, $d\xi'=C_\El\rho_{\al}^{-2/3}(s')ds'$ to obtain
\begin{align*}
Z_{\al,1}(X_0(s))=\frac{1}{15} \int_0^s C_\El \rho_{\al}^{-2/3}(s') \frac{\rrr''(X_0(s'))}{\rrr(X_0(s'))} ds'\\
 + \frac{1}{15\Cl} \int_0^{s} \rho_{\al}^{-2/3}(s') \rrr(X_0(s'))^2 ds'
 - \frac{1}{15}\cbv X_0(s),
\end{align*}
where $s$ is such that $X_0(s)=\xi$, and put $h=\frac{\rho_\El^{-1/3}}{2\sqrt{2}}$ to obtain
\begin{align*}
  Z_{\al, 1}(X_0(s)) =  \frac{4 \Cl}{135} \int_0^s \frac{\rhoL'(X_0(s'))^2}{\rho_{\al}(s')^{8/3}}ds'
  - \frac{\Cl}{45} \int_0^s \frac{\rhoL''(X_0(s'))}{\rho_{\al}(s')^{5/3}}ds'\\
  +\frac{1}{120 \Cl} \int_0^s \frac{ds'}{\rho_{\al}(s')^{4/3}} - \frac{\cbv}{15} X_0(s).
\end{align*}
It can be easily find that
$$\rhoL'(X_0(s))=\frac{\rho_{\al}^{2/3}(s)}{\Cl}\dot{\rho}_{\al}(s),$$
$$
\rhoL''(X_0(s))=\frac{1}{\Cl^2} \rho_{\al}^{4/3}(s)\ddot{\rho}_{\al}(s) +\frac{2}{3\Cl^2} \rho_{\al}^{1/3}(s) \dot{\rho}_{\al}(s)^2,
$$
where dots represent differentiation with respect to arc-length. Thus
\begin{align*}
Z_{\al, 1}(X_0(s)) = \frac{2}{135} \int_0^s \frac{\dot{\rho}_{\al}(s')^2}{\Cl \rho_{\al}(s')^{4/3}}ds'
-\frac{1}{45} \int_0^s \frac{\ddot{\rho}_{\al}(s')}{\Cl \rho_{\al}(s')^{1/3}}ds'\\
+\frac{1}{120 \Cl} \int_0^s \frac{ds'}{\rho_{\al}(s')^{4/3}} - \frac{\cbv}{15} X_0(s).
\end{align*}

By \eqref{E:X1} we have $X_1(s)= - Z_{\al, 1}(X_0(s)) \dot{X}_0(s) Y_0^2(s)$, thus finally we obtain
\begin{align*}
X_1(s)= -\int_0^s \bigg[ \frac{8 \Cl^2 \dot{\rho}_{\al}(s')^2}{135 \rho_{\al}(s')^{4/3}}
-\frac{{4 \Cl^2\ddot{\rho}_{\al}(s')}}{45 \rho_{\al}(s')^{1/3}}
+ \frac{\Cl^2}{30 \rho_{\al}(s')^{4/3}} - \frac{4 \Cl^3\cbv}{15 \rho_{\al}^{2/3}(s')}  \bigg]ds'.
\end{align*}



\appendix
\section{Derivatives}
\newcommand{\rhol}{\rho_{\El}}
\begin{lem}
\label{L:appendix}
Let $\Omega$ be an arbitrary convex domain with a $C^8$ boundary, $x$
denote the Lazutkin parametrization of its boundary.  For $x\in\bT$,
recall that $\rhol(x)$ denotes the radius of curvature of the boundary
at the point on $\partial\Omega$ parametrized by $x$ and
$\theta_{\El}(x)$ the angle between the tangent at the same point and
the positive horizontal semi-axis.  Further, let $\Aang\in C^{3}(\bT)$
and
\begin{align*}
  x_0 = \xi+\Aang(\xi)\eta^{2}, \quad x_+ = \xi + \eta + {\Aang(\xi + \eta)}\eta^{2}.
\end{align*}
Then there exist functions $S_{j}$ and $C_{j}$ for $j = 0,\cdots,5$ so
that:
\begin{align}\label{eq:taylor-expansion}
  \int_{x_0}^{x_+} e^{ i (\theta_{\El}(x')-\theta_{\El}(x_{0}))}\rhol(x')^{2/3}dx' =
  \sum_{j = 0}^{5} C_j\eta^{j}+ i\sum_{j = 0}^{5} S_j\eta^{j}+O(\eta^{6})
\end{align}
and the following formulas hold: let us renormalize
\begin{align*}
  S_{j} &= \frac{\rho_{\El}^{1/3}}{C_{\El}^{2}} \tilde S_{j}&
  C_{j} &= \frac{\rho_{\El}^{2/3}}{C_{\El}}\tilde C_{j}
\end{align*}
Then we have:
\begin{align*}
  \tilde S_{0} &= \tilde S_{1} = 0&
  \tilde S_{2} &= \frac12&
  \tilde S_{3} &= \frac16\frac{\rhol'}{\rhol}
\end{align*}
\begin{align*}
  \tilde S_{4} &=
                 \Aang'-\frac1{27}\left(\frac{\rhol'}{\rhol}\right)^2+\frac5{27}\frac{\rhol''}{\rhol}
                 -\frac1{24C_{\El}^{2}}\rhol^{-2/3}.
\end{align*}
\begin{align*}
  \tilde S_{5} &= \frac12\Aang'' + \frac13 \Aang'\frac{\rhol'}{\rhol}+\frac2{135}\left(\frac{\rhol'}{\rhol}\right)^{3}-\frac4{135}\frac{\rhol'\rhol''}{\rhol^{2}}+\frac7{360}\frac{\rhol'''}{\rho} -\frac1{180C_{\El}^{2}}\frac{\rhol'}{\rhol^{5/3}},
\end{align*}
and
\begin{align*}
  \tilde C_{0} &= 0& \tilde C_{1} &= 1 &\tilde C_{2} = \frac13\frac{\rhol'}{\rhol}
\end{align*}
\begin{align*}
  \tilde C_{3} = \Aang'-\frac1{27}\left(\frac{\rhol'}{\rhol}\right)^2
  + \frac1{9}\frac{\rhol''}{\rhol}
                 -\frac1{6 C_{\El}^{2}}\rhol^{-2/3}
\end{align*}
\begin{align*}
  \tilde C_{4} &= \frac12\Aang'' + \frac13
                 \frac{\rhol'}{\rhol}\Aang' +
                 \frac1{81}\left(\frac{\rhol'}{\rhol}\right)^{3} -
                 \frac1{36}\frac{\rhol'\rhol''}{\rhol^{2}}+\frac1{36}\frac{\rhol'''}{\rho}
                 -\frac1{24 C_{\El}^{2}}\frac{\rhol'}{\rhol^{5/3}}.
\end{align*}
\end{lem}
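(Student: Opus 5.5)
The plan is a direct Taylor expansion in $\eta$ of the integral, with the result organised by powers of $\eta$. First, substitute $x' = x_0 + u$, so the integral becomes $\int_0^{\Delta} e^{i(\theta_{\El}(x_0+u)-\theta_{\El}(x_0))}\rhol(x_0+u)^{2/3}\,du$. Here
\begin{align*}
  \Delta = x_+-x_0 = \eta + \bigl(\Aang(\xi+\eta)-\Aang(\xi)\bigr)\eta^{2} = \eta + \Aang'\eta^{3}+\tfrac12\Aang''\eta^{4}+O(\eta^{5}),
\end{align*}
with all functions evaluated at $\xi$. Since $\Delta = O(\eta)$, expanding the integrand in $u$ to order $u^{4}$ suffices for an $O(\eta^{6})$ remainder. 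This needs $\theta_{\El}$ to order $u^5$ and $\rhol^{2/3}$ to order $u^4$, so at most $\rhol^{(4)}$ appears; this is consistent with the $C^{8}$ hypothesis.

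The key structural input is \eqref{eq:theta-rho-relation-lazutkin}, which reduces all derivatives of $\theta_{\El}$ to derivatives of $\rhol$. With $\theta_{\El}' = \Cl^{-1}\rhol^{-1/3}$ we get
\begin{align*}
  \theta_{\El}(x_0+u)-\theta_{\El}(x_0) = \Cl^{-1}\bigl(\rhol^{-1/3}u - \tfrac16\rhol^{-4/3}\rhol' u^{2}+\cdots\bigr).
\end{align*}
Expanding the exponential, the real part picks up $1-\tfrac12(\delta\theta)^2+\cdots$ and the imaginary part picks up $\delta\theta - \tfrac16(\delta\theta)^3+\cdots$. Multiplying by $\rhol(x_0+u)^{2/3}$, integrating term by term in $u$, and substituting $u=\Delta$ gives double series in $\eta$.

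These series have coefficients evaluated at $x_0 = \xi+\Aang\eta^2$. The next step is to re-expand them around $\xi$. This contributes shifts of the form $\Aang\,\partial_\xi(\cdot)\,\eta^2$, which should cancel against part of the $\Delta$-expansion contribution or combine into the stated $\Aang$ terms; only $\Aang'$ and $\Aang''$ survive. The $\Aang$-dependent terms enter $\tilde C_3,\tilde S_4$ at the first nonvanishing order through $\Delta^{k}$ with $\Delta\approx\eta$. For example, the leading terms $C\approx \rhol^{2/3}\Delta$ and $S\approx \tfrac12\Cl^{-1}\rhol^{1/3}\Delta^{2}$ give $\tilde C_3\ni\Aang'$, $\tilde C_4\ni\tfrac12\Aang''$, $\tilde S_4\ni\Aang'$ and $\tilde S_5\ni\tfrac12\Aang''$. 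The cross terms $\tfrac13(\rhol'/\rhol)\Aang'$ come from the next term of the integrand multiplied by the $\Aang'\eta^3$ correction in $\Delta$. The terms with $\Cl^{-2}$ come from the cubic term of the exponential, and after the normalisation by $\rhol^{2/3}/\Cl$ or $\rhol^{1/3}/\Cl^{2}$ they carry the stated negative powers of $\rhol$.

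Finally, collect the coefficients of $\eta^j$ for $j\le 5$ and divide by the stated normalisations. The easy checks are $\tilde C_1=1$, $\tilde S_2=\tfrac12$, $\tilde C_2=\tfrac13\rhol'/\rhol$ (from $\tfrac12(\rhol^{2/3})'$) and $\tilde S_3=\tfrac16\rhol'/\rhol$. The main obstacle is purely bookkeeping: the fifth-order coefficients $\tilde S_5$ and $\tilde C_4$ mix the third derivatives of $\theta_{\El}$ and $\rhol^{2/3}$, the cross terms with $\Delta$, and the re-centring from $x_0$ to $\xi$, and one term is easily lost. I would carry this out symbolically (as the authors did in SageMath), and sanity-check it on the disc, where $\rhol$ is constant and $\Aang=0$. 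In that case $\tilde S_4=-\tfrac1{24}\Cl^{-2}\rhol^{-2/3}$ must match the $u^3$ term of $\sin$, which it does.
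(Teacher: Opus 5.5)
Your approach is the paper's own: its proof is just a formal Taylor expansion in $\eta$, and organising it via $x'=x_0+u$ and $\Delta=x_+-x_0$ is sensible. The step that fails is the re-centering.

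\textbf{Re-centering.} The expansion of $\Delta$ involves only the difference $\Aang(\xi+\eta)-\Aang(\xi)$, so it never produces an undifferentiated $\Aang$. Re-expanding the $x_0$-coefficients around $\xi$ does produce one, and nothing cancels it. For example, $\rhoL^{2/3}(x_0)\Delta$ adds $\frac23\frac{\rhoL'}{\rhoL}\Aang$ to $\tilde C_3$. Likewise $\frac12\Cl^{-1}\rhoL^{1/3}(x_0)\Delta^2$ adds $\frac16\frac{\rhoL'}{\rhoL}\Aang$ to $\tilde S_4$.

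The $\Aang$-free formulas hold only if every $\rhoL$-dependent quantity, prefactors included, is evaluated at the base point $x_0$, with no re-centering at all. (Whether $\Aang',\Aang''$ are taken at $\xi$ or at $x_0$ is irrelevant at the listed orders.) This is also all Lemma~\ref{P:alpha} needs. $\Phi^+$ is the angle at $x_0$, and $\tilde S_3=\tilde C_2\tilde S_2$ makes the $\eta^2$-coefficient of $\tg\Phi^+$ vanish at every base point. Moving the base point from $x_0$ to $\xi$ therefore shifts only odd coefficients and leaves $\Phi^+_4$ unchanged.

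\textbf{Printed constants.} Carrying out the bookkeeping will not reproduce every constant in the statement, and your prediction for the cross terms is off. Your own mechanism is the $u$-term of the integrand times the $\Aang'\eta^3$ part of $\Delta$. In the real part it gives $\frac12(\rhoL^{2/3})'\cdot 2\Aang'\eta^4$, i.e.\ $\frac23\frac{\rhoL'}{\rhoL}\Aang'$ in $\tilde C_4$. In the imaginary part it gives $\frac{s_2}{3}\cdot 3\Aang'\eta^5$, i.e.\ $\frac{s_2}{s_1}\Aang'=3\tilde S_3\Aang'=\frac12\frac{\rhoL'}{\rhoL}\Aang'$ in $\tilde S_5$. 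Here $s_1,s_2$ are the $u,u^2$ coefficients of the imaginary part of the integrand. Neither coefficient is $\frac13$.

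Three further points:
\begin{itemize}
\item The $\rhoL''/\rhoL$ coefficient of $\tilde S_4$ comes out as $\frac5{72}$, not $\frac5{27}$.
\item The stated normalisations carry an extra factor $\Cl^{-1}$ relative to the integral as written; they are really the coefficients of $\delta z^+$ in \eqref{eq:Phi-pm-lazutkin}. So your disc check confirms only the ratio $\tilde S_4/\tilde S_2$.
\item In the real part, the $\Cl^{-2}$ terms come from $-\frac12(\delta\theta)^2$, not from a cubic term.
\end{itemize}

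None of this damages Lemma~\ref{P:alpha}. There $\tilde S_5$ and $\tilde C_4$ enter only through $\tilde S_5-\frac12\tilde C_4$. Its $\Aang'$-part is $\frac16\frac{\rhoL'}{\rhoL}\Aang'$ with either set of values, and it cancels the remaining $\Aang'$ terms. The value $\frac5{72}$ is exactly what produces the coefficient $-\frac{11}{135}$ of $\rhoL'\rhoL''/\rhoL^2$ there.

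A correct write-up should state the corrected values, or at least the combinations that enter $\Phi^+_4$, rather than assert the printed ones.
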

\begin{proof}
  Integrating~\eqref{eq:theta-rho-relation-lazutkin} yields:
\begin{align*}
  \theta_{\El}(x')-\theta_{\El}(x_{0}) =
  \int_{x_{0}}^{x'}\theta_{\El}'(x'')dx''= C_{\El}^{-1}\int_{x_{0}}^{x'}\rhol^{-1/3}(x'') dx''.
\end{align*}
We can thus write~\eqref{eq:taylor-expansion} as:
\begin{align*}
  \int_{x_0}^{x_+} e^{ i
  (\theta_{\El}(x')-\theta_{\El}(x_{0}))}\rhol(x')^{2/3}dx' &=
\int_{x_0}^{x_+} \exp\left(\frac{i}{C_{\El}}\int_{x_{0}}^{x'}\rhol^{-1/3}(x'') dx''\right)\rhoL(x')^{2/3}dx'.
\end{align*}
The result then follows by performing a formal Taylor expansion in
$\eta$ around $\eta = 0$ and carrying out the algebra.
\end{proof}

\bibliographystyle{abbrv}
\bibliography{alpha}
\end{document}